\newcommand{\black}{\color{black}}
\newcommand{\bu}{{\mathbf u}}
\newcommand{\bv}{{\mathbf v}}
\renewcommand{\phi}{\varphi}
\newcommand{\pa}{\partial}
\newcommand{\uN}{\frac{u}{\sqrt N}}
\newcommand{\vN}{\frac{v}{\sqrt N}}
\renewcommand{\Re}{{\operatorname{Re}\,}}
\renewcommand{\epsilon}{\varepsilon}
\renewcommand{\d}{\partial}
\newcommand{\e}{{\mathbf e}}
\newcommand{\szego}{Szeg\H{o} }
\newcommand{\inv}{^{-1}}
\newcommand{\kahler}{K\"ahler }
\newcommand{\sqrtn}{\sqrt{N}}
\newcommand{\wt}{\widetilde}
\newcommand{\wh}{\widehat}
\newcommand{\PP}{{\mathbb P}}
\newcommand{\Q}{{\mathbb Q}}
\newcommand{\R}{{\mathbb R}}
\newcommand{\C}{{\mathbb C}}
\newcommand{\Z}{{\mathbb Z}}
\newcommand{\CP}{\C\PP}
\renewcommand{\dbar}{\bar\partial}
\newcommand{\ddbar}{\partial\dbar}
\newcommand{\half}{{\textstyle \frac 12}}
\newcommand{\ccal}{\mathcal{C}}
\newcommand{\fcal}{\mathcal{F}}
\newcommand{\hcal}{\mathcal{H}}
\newcommand{\lcal}{\mathcal{L}}
\newcommand{\pcal}{\mathcal{P}}
\newcommand{\rcal}{\mathcal{R}}
\newcommand{\al}{\alpha}
\newcommand{\be}{\beta}
\newcommand{\om}{\omega}
\newtheorem{theorem}{Theorem}[section]
\newtheorem{cor}[theorem]{Corollary}
\newtheorem{lem}[theorem]{Lemma}
\newtheorem{definition}[theorem]{Definition}
\newenvironment{rem}{\medskip\noindent{\it Remark:\/}}{\medskip}
\newenvironment{claim}{\medskip\noindent{\it Claim:\/}}{\medskip}
\title[Asymptotic expansion of the off-diagonal Bergman kernel]
{Asymptotic expansion of the off-diagonal Bergman kernel on compact K\"ahler manifolds}
\author{Zhiqin Lu}
\address{Department of Mathematics, UC Irvine, Irvine, CA 92697, USA}
 \email{zlu@uci.edu}
\author{Bernard Shiffman}
\address{Department of Mathematics, Johns Hopkins University, Baltimore, MD 21218, USA}
 \email{shiffman@math.jhu.edu}
\thanks{Research of the
first  author is partially supported by NSF grant  DMS-1206748.  Research of the
second  author is partially supported by NSF grant  DMS-1201372}
\begin{document}

\begin{abstract}  We compute the first four coefficients 
of the asymptotic off-diagonal expansion in \cite{SZ2} of the  Bergman kernel for the $N$-th power of a positive line bundle on a compact \kahler manifold, and we show that the coefficient $b_1$ of the $N^{-1/2}$ term vanishes when we use a  K-frame. We also show that all the coefficients of the expansion are polynomials in the K-coordinates and the covariant derivatives of the curvature and are homogeneous with respect to the weight $w$ in \cite{lu-1}. \end{abstract}

\maketitle


\section*{Introduction}

 The subject of this paper is the asymptotic expansion of the Bergman kernels for holomorphic sections of powers $L^{N}$ of a positive holomorphic line bundle $L$ on a compact \kahler manifold $M$.  The Bergman kernel $K_N(z,w)$ for $L^N\to M$ is the kernel 
 of the orthogonal projection from the space of $\lcal^2$ sections of
$L^{N}$ to the holomorphic sections.    In 1990, Tian \cite{Ti} gave the leading asymptotics of the diagonal kernel $K_N(z,z)$, and a complete  asymptotic expansion   was  given by Zelditch \cite{Z} and independently by Catlin \cite{Cat} using the Boutet de Monvel-Sj\"ostrand 
parametrix \cite{BouSj}.  A purely complex-geometric proof of the Catlin-Zelditch expansion is given in  \cite{zl}; see also \cites{BBS,DLM, MM} for other approaches. The first three terms of the diagonal expansion were computed in \cite{lu-1}; alternative approaches to the computation of the terms of the expansion are given in \cites{DK-1, haoxu}.   

Zelditch \cite{Z} obtained the asymptotic expansion of $K_N(z,z)$ by viewing it as a  \szego kernel as follows:  the holomorphic sections of $L^{ N}$ can be regarded as CR-holomorphic functions on a circle bundle  $X\to M$ (as described below), and the kernels $K_N(z,w)$ lift to form the reproducing kernels $\Pi_N(x,y)$ for the Fourier components $\hcal^2_N(X)$ of the Hardy space $\hcal^2(X)$  of CR-holomorphic functions on $X$.  Thus the  $\Pi_N(x,y)$ are the Fourier components of the \szego kernel for $X$.   

The Bergman projection kernel  $\Pi_N(z,w)$  can also be viewed as the covariance function (or two-point function) of a Gaussian random field  on $X$. It was shown in \cite{SZ3} that the zero correlation current (which in one complex dimension gives the pair correlation for the point process of zeros of Gaussian holomorphic functions or sections) can be given by a universal formula depending only on the covariance $\Pi_N(z,w)$.  This formula for the zero correlation current together with the scaled off-diagonal asymptotics of \cite{SZ2}  was applied   to the distribution of  zeros,  critical points and excursion sets  of  random holomorphic sections in \cites{Baber, Ha, SZ3,SZ4,SZZ, Sun}. For example,    \cites{SZ3,SZ4} generalize results of Sodin-Tsirelson \cite{ST} on the variance and asymptotic normality of the number of zeros of a random holomorphic function on a domain in $\C$; \cite{SZZ} generalizes \cite{ST3} on the ``hole probablity" of finding no zeros in a domain. 

 The off-diagonal  asymptotics  of the Bergman kernel have also found applications, for example, to  random Bergman metrics \cite{FKZ},  the convergence of geodesics in the space of Bergman metrics on toric varieties \cite{SoZe}, and   the asymptotics of spectral projectors associated to Toeplitz operators  \cite{Pa}. 
 
 A complete asymptotic  expansion of the scaled off-diagonal kernel on a compact, almost complex symplectic manifold $M$ was given in  \cite{SZ2}. In terms of normal coordinates in a point $z_0\in M$, this expansion takes the form  
\begin{equation}\label{near-intro}N^{-m}\,\Pi_N(\uN,\, \vN) \sim \frac {1}{\pi^m}\,e^{u\cdot v-\half(|u|^2+|v|^2)}(1+N^{-1/2}b_1+N\inv b_2 +\cdots N^{-r/2}b_r+\cdots)\,,\end{equation}  for $|u|+|v|<a$. (A precise statement  is given in Theorem \ref{near} below.) 

In this paper, we  consider  the integrable case  where  $M$ is a compact \kahler manifold.
We   
show that the coefficients $b_j$ of \eqref{near-intro}  are polynomials in  the ``K-coordinates"  and the covariant derivatives of the curvature\footnote{The curvature tensor itself is regarded as the $0$-th covariant derivative. We use this convention throughout the paper.}  (Theorem~\ref{homogeneity}), and we compute the first few terms  (Theorems~\ref{main2} and~\ref{mainresult}).   Our formula for computing the terms of the expansion is given in  Lemma~\ref{algorithm}.  We also show  (Lemma~2.5) that the first coefficient $b_1$ of  the expansion \eqref{near-intro} vanishes when we use a ``K-frame" for $L$ and normal coordinates  at $z_0$, and thus $N^{-m}\,\Pi_N(\uN,\, \vN)$ converges at the rate $1/N$ (instead of $1/\sqrtn$) as $N\to\infty$.  The vanishing of $b_1$ (using  different non-holomorphic coordinates and frame) was also given by Ma and Marinescu \cite[(4.1.26)]{MM}, \cite[(2.19)]{MM2}.     

The expansion \eqref{near-intro} immediately gives the $\ccal^k$-bounds
\begin{equation}\label{Ck} D^k\log \Pi_N(u,v)=O(N^{\frac{k-1}2})\,,\quad \mbox{for }\ |u|+|v|<\frac a\sqrtn\,,\ k\ge 3\,.\end{equation} 
(For $k=1,2$, the bound is $O(N^\frac k2)$.\,)
 As a consequence of the proof of our formulas for the coefficients of \eqref{near-intro},  we prove that one can replace the bounds in \eqref{Ck} with  the sharp bounds $O(\sqrtn)$ for $k=3$ and $O(N)$ for all $k\ge 4$ (Theorem~\ref{DKlog}).

\section {Statement of results}
 Let $M$ be a compact complex manifold polarized by an ample line bundle $L$.  To describe the scaled  off-diagonal asymptotic expansion, we
give $L$ a hermitian metric $h$ with positive curvature $\Theta_h$, and we give $M$ the \kahler
form $\om= \frac i2\Theta_h$.
We recall that the Hermitian metric $h$ on $L$ induces
Hermitian metrics
$h^N$ on
$L^N$, and we have $$c_1(L^N,h^N) = N\,c_1(L,h)= \frac N\pi \,\om\,.$$
The  metrics
$h,\om$ induce  Hermitian inner products on the (finite-dimensional) spaces 
 $H^0(M,L^N)$ of holomorphic sections of $L^N$.

 Let $X$ be the unit circle bundle of  the dual line bundle $L^*$. 
As in \cites{BSZ2, SZ2,Z}, we lift holomorphic sections $s\in H^0(M,L^N)$ to $CR$  functions $\hat s$
on $X$ satisfying  $\hat s(e^{i\theta}x)= e^{iN\theta}\hat s(x)$.
We denote the space of such functions by  $ \hcal^2_N(X)$. The
 Bergman   kernel for $H^0(M,L^N)$ lifts to the orthogonal projector
$\Pi_N:\lcal^2(X)\to\hcal^2_N(X)$, which is given by the {\it
\szego kernel}
$$\Pi_N(x,y)=\sum_{j=1}^{d_N} \wh S^N_j(x)\overline{\wh S^N_j(y)}
\qquad (x,y\in X)\;,$$ where the  $\wh S^N_j$ form an orthonormal basis for $\hcal^2_N(X)$,  and 
$d_N=\dim H^0(M,L^N)$. 

 We are interested in the scaled off-diagonal asymptotics of the \szego kernel in a neighborhood of a point $z_0\in M$.
We let  $z= (z_1,\dots,z_m)$ denote  local complex coordinates in a neighborhood $U$ of  $z_0$, and we let $|z|=\sqrt{|z_1|^2+\cdots+|z_m|^2}$.  
Throughout this paper, we shall  assume that  the coordinates of $z_0$ are $(0,\dots,0)\in\mathbb C^m$.
To properly describe the scaling asymptotics for the \szego kernel at a point  $z_0
\in M$,   we need to use suitable coordinates in the circle bundle $X$. To do this, we first recall the following definition from \cite{SZ2}:

\begin{definition}\label{preferred} A preferred local frame at a point $z_0\in M$ is a $\ccal^\infty$ section $\e_L$  of $L$  over a neighborhood $U$ of $z_0$ 
such that $\dbar \e_L$ vanishes to first order at $z_0$ and 
\begin{equation} \phi(z):=-\log \|\e_L(z)\|^2_h =  \sum g_{j\bar k}(0)z_j\bar z_k +O(|z|^3)\,.\label{defphi}\end{equation}
\end{definition}

 Suppose that $(z_1,\dots,z_m)$ are  complex  coordinates on a neighborhood $U$ of $z_0$ and $\e_L$ is a $\mathcal C^\infty$  local frame over $U$. We then give points $e^{i\theta-\phi(z)/2}\,\e_L^*(z)$ of the circle bundle $X$ (over $U$) the coordinates $(z_1,\dots,z_m,\theta)\in \C^m\times (\R/2\pi\Z)$, and we write
\begin{equation}\label{coords}\Pi_N(z,\theta_1; w,\theta_2)=\Pi_N^{z_0}(z,\theta_1; w,\theta_2)
=\Pi_N\left(e^{i\theta_1-\phi(z)/2}\,\e_L^*(z)\;,\;e^{i\theta_2-\phi(w)/2}\,\e_L^*(w)\right)\;.\end{equation}  

We also let  \begin{equation}\label{BF}\Pi_{BF}\left(z,\theta_1;w,\theta_2\right) =
\frac{1}{\pi^m} e^{i  (\theta_1 - \theta_2)+  z \cdot \bar{w} - \frac
12 (|z|^2+ |w|^2)}\end{equation} denote the \szego kernel for the Bargmann-Fock space
 of functions on $\C^m$  (see \cite{BSZ2}).

We recall the  off-diagonal
asymptotics from \cite {SZ2} (see also \cite{SZ3}*{\S 5}):

\begin{theorem}\label{near}{\rm (cf.\ \cite{SZ2}*{Theorem~3.1})} Let $(L,h)\to (M,\om)$ be a positive line bundle over an $m$-dimensional compact \kahler manifold with  \kahler
form $\om= \frac i2\Theta_h$. Let $\e_L$ be a   preferred  local frame for $L$ and let $z_1,\dots,z_m$ be   complex coordinates  about a point $z_0\in M$ such that the \kahler potential $\phi  :=-\log \|\e_L(z)\|^2_h = |z|^2 +O(|z|^3)$.  Then  for all $k\in\Z^+$, 
\begin{align} 
\begin{split}\label{b_r} & N^{-m}\,\Pi_N^{z_0}\left(\frac{u}{\sqrtn},\frac{\theta_1}{N};
\frac{v}{\sqrtn},\frac {\theta_2} N\right)\\
&= \Pi_{BF}(u,\theta_1;v,\theta_2)\left[1+ \sum_{r = 1}^{k} N^{-r/2}
b_{r}(u,v) + N^{-(k +1)/2}  E_{Nk}(u,v)\right]\;,\end{split}\end{align}
where  \begin{itemize}
\item each
$b_r(u,v)$ is a polynomial  (in $u,v,\bar u,\bar v$)  of degree at most $5r$,    and is an even polynomial if $r$ is even, and odd if $r$ is odd; 
\item for all $a\in\R^+$ and  $j\ge 0$, there exists a positive constant $C_{jka}$ such that 
\begin{equation}\label{ENk}|{  D^j} E_{Nk}(u,v)|\le C_{jka}\quad \mbox{for }\
|u|+|v|< a\,.\end{equation}\end{itemize}\end{theorem}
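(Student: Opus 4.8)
The plan is to establish the scaled off-diagonal expansion \eqref{b_r} by reducing it to the Boutet de Monvel--Sj\"ostrand parametrix for the \szego kernel on the circle bundle $X$, exactly as in \cite{SZ2} and \cite{Z}. First I would recall that the \szego projector $\Pi_N$ is the $N$-th Fourier coefficient (with respect to the $S^1$-action on $X$) of the full \szego kernel $\Pi(x,y)$, so that $\Pi_N(x,y)=\frac{1}{2\pi}\int_0^{2\pi}e^{-iN\theta}\,\Pi(r_\theta x,y)\,d\theta$, where $r_\theta$ denotes the circle action. The Boutet de Monvel--Sj\"ostrand theorem expresses $\Pi(x,y)$ as a Fourier integral operator with complex phase, $\Pi(x,y)=\int_0^\infty e^{t\psi(x,y)}\,s(x,y,t)\,dt + (\text{smoothing})$, where the phase $\psi$ satisfies $\psi(x,x)=0$, $\Re\psi\le 0$ with vanishing exactly on the diagonal, and the symbol $s$ has a classical asymptotic expansion $s\sim\sum_{k\ge 0}t^{m-k}s_k(x,y)$. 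Substituting the scaled arguments $x=(u/\sqrtn,\theta_1/N)$, $y=(v/\sqrtn,\theta_2/N)$ and performing the $t$-integral by the method of stationary phase (or by a rescaling $t\mapsto Nt$ followed by a saddle point analysis) produces the leading Bargmann--Fock factor $\Pi_{BF}$ together with the descending half-powers of $N$.

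The heart of the argument is the Taylor expansion of the phase $\psi$ and the symbol $s$ about the diagonal point $z_0$ in the preferred frame and \kahler-normal coordinates. The choice of a preferred frame (Definition~\ref{preferred}) guarantees that $\phi(z)=|z|^2+O(|z|^3)$, so that after scaling by $1/\sqrtn$ the quadratic part of the phase reproduces exactly the Bargmann--Fock exponent $u\cdot\bar v-\tfrac12(|u|^2+|v|^2)$, while the cubic and higher terms of $\phi$, together with the subleading symbols $s_k$, contribute the correction polynomials $b_r(u,v)$. I would organize the bookkeeping by the total power of $N^{-1/2}$ generated: each extra factor of $z$ or $w$ carries a factor $N^{-1/2}$ after scaling, and each drop in the symbol index $k$ carries a factor $N^{-1}$. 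Collecting terms of a fixed order $N^{-r/2}$ and carrying out the Gaussian ($t$-)integral term by term yields $\Pi_{BF}$ times a polynomial in $u,v,\bar u,\bar v$. The degree bound ``at most $5r$'' comes from tracking how many coordinate factors can accompany a given power of $N^{-1/2}$ in the Taylor data of $\psi$ and $s$, and the parity statement (even $b_r$ for even $r$, odd for odd $r$) follows from the observation that the scaling substitution $z=u/\sqrtn$ sends a homogeneous term of coordinate-degree $d$ to $N^{-d/2}$ times a degree-$d$ polynomial, so the parity of $d$ matches the parity of $r$.

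The remainder estimate \eqref{ENk} is obtained in the standard way for such parametrix expansions: one truncates the asymptotic expansions of $\psi$ and $s$ at order $k+1$, applies Taylor's theorem with integral remainder, and controls the resulting error uniformly on the scaled region $|u|+|v|<a$ using the fact that $\Re\psi<0$ off the diagonal provides Gaussian decay in the $t$-integral. Because the truncation is at a fixed finite order, the remainder $E_{Nk}$ and each of its derivatives $D^j E_{Nk}$ are bounded uniformly on the compact set $\{|u|+|v|\le a\}$, with constants $C_{jka}$ depending only on $j,k,a$; the derivatives in $u,v$ are handled by differentiating under the $t$-integral, which is legitimate since all the data depend smoothly on the scaled variables. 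The main obstacle, and the step requiring the most care, is the uniform control of the stationary-phase/saddle-point analysis of the oscillatory $t$-integral after scaling: one must verify that the critical point of the phase is nondegenerate and depends smoothly on $(u,v)$, and that the stationary phase expansion is valid uniformly in $(u,v)$ on the compact region with remainders controlled in every $\ccal^j$ norm. Since the complete proof of this theorem is carried out in \cite{SZ2}*{Theorem~3.1}, I would treat the present statement as a recalled result and refer to that reference for the detailed parametrix analysis, using it as the starting point for the coefficient computations that follow.
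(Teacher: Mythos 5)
Your proposal is correct and matches the paper's treatment: the paper does not prove this theorem but simply recalls it from \cite{SZ2}*{Theorem~3.1}, and your sketch of the Boutet de Monvel--Sj\"ostrand parametrix, the scaled stationary-phase analysis of the $t$-integral, and the bookkeeping that produces the degree and parity properties of the $b_r$ accurately reflects the method of that reference before deferring to it. Nothing further is required, since treating the statement as a recalled result is exactly what the paper itself does.
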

Here   $|D^jF(u,v)|$ denotes the sum of the norms of  the partial derivatives of $F$ of order $j$ at  $(u,v)$.

If one chooses a smoothly varying family of normal coordinates and preferred frames about points $z$ in a neighborhood of $z_0$, then the polynomials $b_k$ and remainders $E_{Nk}$ in~\eqref{b_r} vary smoothly in $z$, and the $(z,u,v)$-derivatives are bounded independently of $N$; setting $u=v=0$ in \eqref{b_r}, one obtains the Catlin-Zelditch asymptotic expansion \cites{Cat,Z},
\begin{equation}\label{CZ}  N^{-m}\,\Pi_N(z,z) \sim \frac {1}{\pi^m}\left[ 1 + a_1(z)N\inv +a_2(z)N^{-2} +\cdots +a_r(z)N^{-r}+\cdots\right]\,.\end{equation} Precisely, for all $j,k\ge 0$, there exist constants $C_{jk}$ such that $$\left\| N^{-m}\,\Pi_N(z,z)- \pi^{-m}[ 1 + a_1(z)N\inv  +\cdots +a_k(z)N^{-k}]\right\|_{\ccal^j(M)} \le N^{-k-1}C_{jk}.$$

 Note  that we do not assume in Theorem \ref{near} that the coordinates $z_j$ are holomorphic.  (Indeed,  Theorem~\ref{near}  is stated in \cite{SZ2}    for almost complex, symplectic manifolds.) Note also  that the point in $M$ with coordinates $(u_1/\sqrtn,\dots,u_m/\sqrtn)$ depends  on the $\ccal^\infty$ germ at $z_0$ of the coordinate map,  and thus the polynomials $b_r(u,v)$ are not well-defined functions on $T_{z_0}(M)$, but depend on the higher jets of the coordinate map  and the local frame. However, in our main results (Theorems~\ref{homogeneity} and \ref{mainresult}),  we use K-coordinates and a K-frame, which we define below. By the uniqueness properties of these coordinates (see the discussion  following Definition~\ref{Bochnerinfty}),   our coefficients are intrinsic.

Let $z_1,\cdots,z_m$ be holomorphic local coordinates about  $z_0$ and let $\e_L$ be a holomorphic frame about  $z_0$. Then  the \kahler form is given by  \begin{equation}\label{kahler} \om = \frac i2 \ddbar \phi = \frac i2 g_{j\bar k}\,d z_j\wedge d\bar z_k\,.\end{equation}  
The components of the curvature tensor
are given by 
\begin{equation}\label{Rijkl}
R_{i\bar jk\bar l}=-\frac{\pa^2 g_{i\bar j}}{\pa z_k\pa \bar z_{l}}+g^{p\bar q}\,\frac{\pa g_{i\bar q}}{\pa z_k}\,\frac{\pa g_{p\bar j}}{\pa\bar  z_{l}}\,.
\end{equation}   (Throughout this paper,   formulas are summed over repeated indices.)
 By our convention, the scalar curvature is  $\rho=R_{i\ j}^{\ i\ j}
=g^{i\bar j}g^{k\bar l}R_{i\bar jk\bar l}$\,. 

The above formulas hold at $z_0$ under the following weaker conditions. 
 If $\e_L$ is a  preferred frame at $z_0$, then \eqref{kahler} holds  at $z_0$; if $\e_L$ is holomorphic to infinite order at $z_0$,  i.e.\ $\bar\pa\e_L$ vanishes to infinite order at $z_0$,   then \eqref{kahler} holds  to infinite order at $z_0$. 

If the complex coordinates $z_1,\dots,z_m$ are {\it holomorphic to infinite order at\/} $z_0$, i.e.\ the 1-forms $\dbar z_j$ vanish to infinite order at $z_0$, then  \eqref{kahler} and \eqref{Rijkl}   hold to infinite order at $z_0$.

A consequence of our  formula for the coefficients of the expansion (Theorem~\ref{mainresult}) is that $b_1$ vanishes and $b_2$ is an expression in the curvature tensor when we use  appropriate  coordinates and  frame: 
 
 \begin{theorem}\label{main2} Let $(L,h)\to (M,\om)$  with $\om = \frac i2\ddbar\phi$  be as in Theorem \ref{near}.  Let $\e_L$ be a holomorphic frame and  let $z_1,\dots,z_m$ be holomorphic coordinates such that the potential $\phi$ is of the form \begin{equation} \label{Korder4}\phi(z) = |z|^2 + \left.\frac 14 \,\frac{\d^4\phi}{\d z_j\d z_p\d\bar z_k\d\bar z_l}\right|_{z=0} z_j\bar z_k z_p\bar z_q + O(|z|^5)\,.\end{equation}  Then
\begin{multline*}   N^{-m}\,\Pi_N^{z_0}\left(\frac{u}{\sqrtn},\frac{\theta_1}{N};
\frac{v}{\sqrtn},\frac {\theta_2} N\right)\\
= \Pi_{BF}(u,\theta_1;v,\theta_2) \Big[1+ N\inv\Big(\textstyle\half\rho+\frac 18 R(u,\bar u,u,\bar u) + \frac 18 R(v,\bar v,v,\bar v) -\frac 14 R(u,\bar v,u,\bar v)\Big)\\ +  N^{-3/2}  E_{N3}(u,v)\Big]\:,\end{multline*}
where $E_{N3}$ satisfies the estimate \eqref{ENk}  for $k=3$.
\end{theorem}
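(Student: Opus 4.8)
The plan is to reduce the assertion to a perturbative computation of a weighted Bergman kernel on $\C^m$ and then read off $b_1$ and $b_2$. The guiding observation, noted in the discussion following Theorem~\ref{near}, is that each coefficient $b_r$ in \eqref{b_r} depends only on the higher jets at $z_0$ of the coordinate map and of the local frame $\e_L$. Since we are handed a holomorphic frame and holomorphic coordinates realizing the normal form \eqref{Korder4}, I may replace the geometric data by $\C^m$ carrying the weight $e^{-N\phi}$, where $\phi(z)=|z|^2+P_4(z,\bar z)+O(|z|^5)$ and $P_4$ is the quartic term displayed in \eqref{Korder4}. In this frame a section $f\,\e_L^N$ has pointwise norm $|f|^2e^{-N\phi}$, so the lift to $X$ of the \szego kernel in the coordinates \eqref{coords} is
\[
\Pi_N^{z_0}(z,\theta_1;w,\theta_2)=e^{iN(\theta_1-\theta_2)}\,e^{-\frac N2(\phi(z)+\phi(w))}\,B_N(z,\bar w),
\]
where $B_N$ is the reproducing kernel for holomorphic functions in $\lcal^2\bigl(\C^m,\ e^{-N\phi}\det(g_{i\bar j})\,dV\bigr)$ and $dV$ is Lebesgue measure. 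Everything reduces to the behavior of $B_N$ under the scaling $z\mapsto u/\sqrtn$, $w\mapsto v/\sqrtn$.

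The second step is the parabolic rescaling together with the expansion of the measure. The elementary identity
\[
N\phi\!\left(\tfrac{u}{\sqrtn}\right)=|u|^2+N\inv P_4(u,\bar u)+O(N^{-3/2})
\]
is the heart of the matter, and the crucial feature of \eqref{Korder4} is the \emph{absence of a cubic term}. A term of order $|z|^3$ in $\phi$ would survive at order $N^{-1/2}$ after rescaling and is the only possible source of an odd coefficient $b_1$; its vanishing forces $b_1\equiv 0$, in harmony with the parity assertion of Theorem~\ref{near}. Consequently the rescaled weight is $e^{-|u|^2}\bigl(1-N\inv P_4(u,\bar u)+O(N^{-3/2})\bigr)$, its leading order recovers the Bargmann-Fock kernel $\Pi_{BF}$, and --- since the order $N^{-1/2}$ perturbation vanishes --- the coefficient $b_2$ is produced by \emph{first-order} perturbation theory alone, with no second-order cross terms. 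The two $N\inv$ perturbations that feed $b_2$ are the quartic $P_4$ of the potential and the quadratic leading term of the volume factor $\det(g_{i\bar j})=1+O(|z|^2)$.

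To produce $b_2$ I would expand $B_N$ to first order in these perturbations, writing $B_N$ through the monomial basis $z^\alpha$, which is orthonormal for the unperturbed Gaussian inner product. First-order perturbation theory then splits the correction into a \emph{diagonal} part --- the change in the norms $\|z^\alpha\|^2$, whose sum over $\alpha$ produces a trace of the quartic data --- and an \emph{off-diagonal} part, namely the Wick contraction of the perturbation against $\Pi_{BF}$ evaluated at the two base points $u,v$. The last step is to pass to curvature: because \eqref{Korder4} are normal coordinates, the first derivatives of $g_{i\bar j}$ vanish at $z_0$, so the connection term in \eqref{Rijkl} drops out and $R_{i\bar jk\bar l}(0)=-\,\partial^2 g_{i\bar j}/\partial z_k\partial\bar z_l$; in particular $P_4(u,\bar u)=-\tfrac14R(u,\bar u,u,\bar u)$. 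The trace piece then assembles, together with the volume-form contribution, into the scalar curvature term $\half\rho$, while the off-diagonal contractions assemble into $\tfrac18R(u,\bar u,u,\bar u)+\tfrac18R(v,\bar v,v,\bar v)-\tfrac14R(u,\bar v,u,\bar v)$. A useful check is that setting $u=v=0$ must return the Catlin-Zelditch coefficient $a_1=\half\rho$ of \eqref{CZ}, which fixes the normalization of the trace term.

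The main obstacle is the Gaussian (Wick) bookkeeping that delivers the precise rational coefficients $\half,\tfrac18,\tfrac18,-\tfrac14$: one must cleanly separate the self-contraction of the bidegree-$(2,2)$ tensor $P_4$ (which yields the trace, hence $\rho$) from the contractions that land on the external variables $u$ and $v$, while simultaneously tracking the $\det(g_{i\bar j})$ correction to the measure. The second, more routine, obstacle is to certify that the formal $N\inv$ coefficient computed in this way is genuinely the $b_2$ of the rigorous expansion and that the remainder obeys \eqref{ENk} with $k=3$. Here I would invoke Theorem~\ref{near}, which already provides a bona fide asymptotic expansion with uniformly controlled derivatives; matching the formal computation against that established expansion order by order, and using the uniqueness of asymptotic coefficients, upgrades the formal identity to the assertion of the theorem.
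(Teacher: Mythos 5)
Your outline has a genuine gap at its very first step. You trade the \szego kernel of the compact manifold $(M,L^N)$ for the reproducing kernel of the weighted space of holomorphic functions on $\C^m$ with weight $e^{-N\phi}\det(g_{i\bar j})\,dV$, citing the remark after Theorem \ref{near} that the $b_r$ ``depend on the higher jets of the coordinate map and the local frame.'' That remark asserts only that the $b_r$ are not intrinsic---they depend on these choices \emph{in addition to} the global geometry; it does not assert that the jet of $\phi$ at $z_0$ determines the $b_r$, and a fortiori it does not license replacing the compact manifold by a local model. That replacement is a localization theorem: one must show that the true kernel and the model kernel agree to $o(N^{-1})$, uniformly with derivatives for $|u|+|v|<a$, on the scaled neighborhood. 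This is precisely the hard analytic content of Bergman kernel asymptotics---it is what the Boutet de Monvel--Sj\"ostrand parametrix, or the localization arguments of \cite{BBS} and \cite{MM}, provide---and it is neither proved in this paper nor deducible from Theorem \ref{near}, which says nothing about the non-compact model. Your closing appeal to ``uniqueness of asymptotic coefficients'' does not repair this: uniqueness identifies the coefficients of two expansions of the \emph{same} function (or of functions already known to differ by $o(N^{-1})$), and the needed comparison between $\Pi_N$ and the model kernel is exactly the unproven localization statement. Until it is supplied, your Wick computation determines the expansion of the model kernel, not the $b_2$ of \eqref{b_r}. (Secondary issues---the weight is defined only near $0$, the quartic perturbation is unbounded, so the perturbation series and the remainder bound \eqref{ENk} with $k=3$ require cutoffs and derivative estimates---are real but routine by comparison.)

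For contrast, the paper closes this loop without any new localization, by exploiting holomorphicity rather than perturbation theory: since $B_N(z,w)$ of Definition \ref{BN} is holomorphic in $z$ and anti-holomorphic in $w$ to infinite order at $z_0$, its scaled off-diagonal values are recovered from its diagonal values by Taylor expansion (polarization), which is Lemma \ref{taylor}; on the diagonal, $\log B_N(z,z)=\log\Pi_N(z,z)+N\phi(z)$ is known from the Catlin--Zelditch expansion \eqref{CZ} with the coefficients computed in \cite{lu-1}, and the derivative bounds needed to control the Taylor remainder come from Theorem \ref{near} itself. Lemma \ref{algorithm} then expresses the coefficients $\beta_j$ (hence $b_j$, after exponentiating) through derivatives of the $\alpha_r$ at $z_0$ and the jets $\phi_{P\overline Q}(0)$, which Lemma \ref{lem34} converts into curvature; Theorem \ref{main2} is then the $b_1,b_2$ case of Theorem \ref{mainresult}. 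If you wish to keep your model-kernel computation, you must first import or prove the localization estimate; alternatively, reroute your ``diagonal trace plus off-diagonal Wick contraction'' bookkeeping through the diagonal as the paper does, since the diagonal data is already available from \cite{lu-1}.
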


Here, $R$ denotes the curvature tensor $R(s,\bar t, u,\bar v)=R_{i\bar j k\bar l}(z_0) s_i \bar t_j u_k \bar v_l$ and $\rho =\rho(z_0)$ is the scalar curvature at $z_0$.  Under the assumption of the theorem, the coordinates are normal coordinates at $z_0$, and therefore $$R_{j\bar kp\bar q}(0) = -\frac{\pa^2 g_{j\bar k}}{\pa z_p\pa \bar z_{q}}(0)=-\frac{\d^4\phi}{\d z_j\d z_p\d\bar z_k\d\bar z_l}(0)\,.$$  Condition \eqref{Korder4} means that  $z_1,\dots,z_m$ and $\e_L$ are ``K-coordinates with a K-frame of order 4" as in Definition \ref{bochnerdef}. (See the discussion following Definition \ref{Bochnerinfty} for the existence of holomorphic K-coordinates and K-frames.)   The terms of the off-diagonal expansion  of the \szego kernel  depend on the choice of coordinates and are in general not tensors. After we  posted the first version of this paper, Xiaonan Ma and George Marinescu applied their prior work \cites{MM,MM2} to compute the $N^{-1}$ term of an off-diagonal expansion using  different (non-holomorphic) coordinates and frame (see \cite{MM3}). 

Setting $v=0,\ \theta_2=0$ in Theorem \ref{main2}, we have:

\begin{cor}\label{main2cor} Under the hypotheses of Theorem \ref{main2}, the off-diagonal \szego kernel at the pair of points $z_0+u/\sqrtn$ and $z_0$ is given by
\begin{equation*}   N^{-m}\,\Pi_N^{z_0}\left(\frac{u}{\sqrtn},\frac{\theta}{N};
0,0\right)=\frac{1}{\pi^m} \, e^{-\frac12|u|^2+i\theta} \left[1+ N\inv\Big(\textstyle\half\rho+\frac 18 R(u,\bar u,u,\bar u)\Big) +  O(N^{-3/2})\right]\end{equation*} for $|u|<a$,  where $a\in\R^+$. \end{cor}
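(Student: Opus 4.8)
The plan is to obtain the corollary as a direct specialization of Theorem~\ref{main2}, setting $v=0$ and $\theta_2=0$ in the displayed formula. First I would evaluate the Bargmann--Fock kernel at these values. By the explicit expression \eqref{BF}, one has $\Pi_{BF}(u,\theta;0,0)=\frac1{\pi^m}e^{i\theta-\frac12|u|^2}$, since the cross term $u\cdot\bar w$ and the $|w|^2$ term both drop out when $w=0$, while the phase reduces to $i\theta$. This accounts for the Gaussian prefactor $\frac1{\pi^m}e^{-\frac12|u|^2+i\theta}$ appearing in the statement.

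Next I would examine which of the three curvature terms in the $N\inv$ coefficient survive the specialization. The term $\half\rho$ is independent of $u,v$ and is unaffected, and the term $\frac18R(u,\bar u,u,\bar u)$ depends only on $u$ and is likewise unchanged. The remaining two terms, $\frac18R(v,\bar v,v,\bar v)$ and $-\frac14R(u,\bar v,u,\bar v)$, each contain the antiholomorphic variables $\bar v_j$ to degree two (and the first contains $v_j$ to degree two as well), and so vanish identically at $v=0$. Hence the $N\inv$ coefficient collapses to $\half\rho+\frac18R(u,\bar u,u,\bar u)$, matching the corollary.

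Finally I would handle the remainder. The error term $N^{-3/2}E_{N3}(u,v)$ in Theorem~\ref{main2} satisfies the uniform bound \eqref{ENk} with $k=3$; taking $j=0$ gives $|E_{N3}(u,v)|\le C_{03a}$ for $|u|+|v|<a$. Restricting to the slice $v=0$ keeps us inside this region whenever $|u|<a$, so $|E_{N3}(u,0)|\le C_{03a}$ there, and the product of this error with the bounded Gaussian prefactor is $O(N^{-3/2})$ uniformly for $|u|<a$. Assembling the three observations yields the claimed formula.

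Since the corollary is a straightforward specialization of Theorem~\ref{main2}, I do not anticipate a genuine obstacle. The only point requiring minimal care is verifying that the uniform remainder estimate \eqref{ENk}, stated for $|u|+|v|<a$, continues to hold after setting $v=0$; this is immediate, as the slice $\{v=0,\ |u|<a\}$ lies in the region where the bound is valid.
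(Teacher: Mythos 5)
Your proposal is correct and is exactly the paper's own argument: the paper proves the corollary simply by the remark ``Setting $v=0,\ \theta_2=0$ in Theorem \ref{main2}, we have:'' and your three observations (evaluation of $\Pi_{BF}$ at $w=0$, vanishing of the two curvature terms containing $\bar v$, and the uniform bound on $E_{N3}$ restricted to the slice $v=0$) are precisely the details implicit in that specialization.
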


Recall that  $R(u,\bar u,u,\bar u)$ is the holomorphic sectional curvature in the $u$-direction multiplied by $|u|^4$.

In order for the higher $b_j$ to be well-defined, we need to strengthen the hypotheses on the coordinates and local frame.  To do this we first summarize \cite{zl}*{Definition A.1, A.2} as follows:

\begin{definition} \label{bochnerdef} Let $\e_L$ be a holomorphic frame of $L$ about  $z_0$ and $z_1,\cdots, z_m$ be a holomorphic coordinate system about  $z_0$. Let $\phi(z)$ be the \kahler potential function  given by \eqref{defphi}. 
Let $$\phi(z) \sim |z|^2 + \sum a_{JK} z^J \bar z^K\,,\quad |J|+|K| \ge 3$$ be the   Taylor expansion of the  \kahler potential, where we use the notation $J=(j_1,\dots, j_m)$, $|J|= j_1+\cdots+j_m$, $z^J=z_1^{j_1}\cdots z_m^{j_m}$. 
Let $3\le n\le \infty$.  The  frame 
$\e_L$ is called a 
K-frame of order $n$ if  $a_{JK}=0$  whenever $|J|+|K|\le n$ and either $|J|=0$ or $|K|=0$.   The coordinates $z_1,\cdots, z_m$ are called the 
K-coordinates of order $n$, if $a_{JK}=0$  whenever $|J|+|K|\le n$ and either $|J|=1$ or $|K|=1$.
\end{definition}
 
 We recall that the notation $f(z) \sim  \sum c_{JK} z^J \bar z^K$ for an asymptotic series means that $f(z) =  \sum _{|J|+|K| \le n}c_{JK} z^J \bar z^K +O(|z|^{n+1})$, for all $n\in \Z^+$.  

Combining the two parts of Definition \ref{bochnerdef}, we have K-coordinates with a K-frame of order $n$ precisely when 
\begin{equation} \label{bochnerk}  \phi(z) = |z|^2 +\!\!\! \sum_{\begin{smallmatrix}|J|\ge 2, \; |K| \ge 2\\|J|+|K|\le n\end{smallmatrix}}\!\!\! a_{JK} z^J \bar z^K \;+\;O(|z|^{n+1})\,.\end{equation}   
In particular, normal coordinates are K-coordinates of order 3;  a preferred frame  is a  K-frame of order 2.

 If $\omega$ is only smooth, one cannot always find K-coordinates or  a K-frame   of  order $n=\infty$ in the above definition, so we extend our definition of  K-coordinates and a K-frame for this case: 

\begin{definition} \label{Bochnerinfty} Let $\e_L$ be a smooth frame about  $z_0$ such that $\dbar \e_L$ vanishes to infinite order at a point $z_0$, and let $z_1,\dots,z_m$ be complex coordinates about $z_0$  such that $\dbar z_j$ vanishes to infinite order at  $z_0=0$.  We say that these are  K-coordinates with a K-frame at $z_0$ if the Taylor expansion of the \kahler potential has the form
 \begin{equation}\label{bochner}\phi(z) \sim |z|^2 + \sum a_{JK} z^J \bar z^K\,,\quad |J|\ge 2\,, \ |K| \ge 2\,,\end{equation}   \end{definition}

Bochner \cite{Bo} showed that if $\phi$ is real-analytic, then (after adding a pluriharmonic  function to $\phi$, i.e.\  by making a holomorphic change of the local frame $\e_L$)  there exist unique holomorphic  K-coordinates of infinite order  (up to  a unitary  transformation) and a  unique holomorphic K-frame of infinite order (up to a unit complex number),  and the   expansion \eqref{bochner} is in fact a convergent power series equal to $\phi(z)$. When the \kahler form $\omega$ is $\ccal^\infty$,  one can find holomorphic K-coordinates  and a holomorphic K-frame of any finite order $n$. Furthermore, in this case,  one  can use  Bochner's  method to find formal power series solutions to \eqref{bochner}. Using the fact that any formal power series is the asymptotic Taylor series of a $\ccal^\infty$ function, one obtains the  existence of smooth K-coordinates with a K-frame (of infinite order) as in Definition \ref{Bochnerinfty}.

 The second main result of this paper, Theorem \ref{homogeneity} below, is that the coefficients $b_j$ of the asymptotic expansion \eqref{b_r} are ``homogeneous" expressions in the curvature tensor. To describe what this means, we let $\mathcal F$ 
be the ring of polynomials of the covariant derivatives of the curvature  with coefficients in $\Q$.   
A monomial in $\mathcal F$ is defined to be  a product  of terms like $\rho_{,I\bar J}$, ${\rm Ric}_{i\bar j,I\bar J}$, or $R_{i\bar jk\bar l,I\bar J}$, where $I=(i_1\cdots i_p)$ and $J=(j_1\cdots j_q)$  are multiple indices.
In~\cite{lu-1},  the weight  $w$ is defined by:
\begin{equation}\label{weight}
w(\rho_{,I\bar J})=w({\rm Ric}_{i\bar j,I\bar J})=w(R_{i\bar jk\bar l,I\bar J})=1+\frac{p+q}{2}.
\end{equation}

We let $\mathcal P$ be the space of polynomials of $u,v,\bar u,\bar v$ with coefficients in $\mathcal F$, and we extend the definition of $w$  to   monomials in $\pcal$  by requiring that $w(AB)=w(A)+w(B)$ and $w(u)=w(v)= w(\bar u)=w(\bar v)=0$.
We call an element in $\mathcal P$  $w$-homogeneous  if all of its monomials have the same weight. If an element is $w$-homogeneous, we define its degree  to be the weight of its monomials.

\begin{theorem} \label{homogeneity} Let $2\le r\le n-2$.  Then under the hypotheses of Theorem \ref{near} with  K-coordinates $(z_1,\dots,z_m)$ and K-frame  $\e_L$ of order $n$, the
 coefficient $b_r$ is a   polynomial in $u,v,\bar u,\bar v$ and the covariant derivatives of the curvature, and  is $w$-homogeneous of weight  $r/2$.  Moreover, as a polynomial  in   $u,v,\bar u,\bar v$ (with coefficients in the ring $\fcal$ defined above):  
\begin{itemize} \item if $r$ is even, $b_r$ is  an even polynomial of degree $2r$  in   $u,v,\bar u,\bar v$; 
\item  if $r$ is odd, $b_r$ is  an odd polynomial   of degree  $ 2r-1$  in   $u,v,\bar u,\bar v$.
\end{itemize}
 \end{theorem}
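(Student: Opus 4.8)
The plan is to reduce everything to the structure of the scaled local potential and then exploit a single scaling symmetry. Write the rescaled germ that governs the near-diagonal asymptotics at $z_0$ as $\psi_N(u)=N\phi(u/\sqrtn)$. Inserting the Taylor expansion \eqref{bochnerk} gives
\begin{equation*}\psi_N(u)=|u|^2+\sum_{|J|,|K|\ge 2}a_{JK}\,N^{1-\frac{|J|+|K|}2}\,u^J\bar u^K\,,\end{equation*}
so each coefficient $a_{JK}$ enters with the power $N^{-w}$, where $w:=\tfrac{|J|+|K|-2}2$. I would first recall from \cite{lu-1} (see also \cite{zl}) that, because we are using K-coordinates and a K-frame of order $n$, every such $a_{JK}$ with $|J|+|K|\le n$ is a polynomial with rational coefficients in the covariant derivatives of the curvature at $z_0$, and that this polynomial is $w$-homogeneous of weight $w$ in the sense of \eqref{weight}; this is exactly why the mixed condition $|J|,|K|\ge 2$ is needed. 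Next, the algorithm of Lemma~\ref{algorithm} exhibits $b_r$ as a \emph{universal} polynomial in the jet coefficients $a_{JK}$ and in $u,v,\bar u,\bar v$, in which the $a_{JK}$ appear only through the combinations $a_{JK}N^{1-\frac{|J|+|K|}2}$. Collecting the power of $N$ shows that only coefficients of total weight $r/2$ can occur, so each individual $a_{JK}$ entering $b_r$ has $w\le r/2$, i.e.\ $|J|+|K|\le r+2\le n$; all of these are therefore of mixed type and are genuine covariant-derivative expressions. This already yields the first assertion, that $b_r\in\pcal$.

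For the homogeneity I would use the scaling $\phi\mapsto\phi^{\la}$, $\phi^{\la}(z):=\la^{-2}\phi(\la z)$, under which the Taylor coefficients transform by $a_{JK}\mapsto \la^{|J|+|K|-2}a_{JK}=\la^{2w}a_{JK}$ while the leading term $|z|^2$ is preserved. The key point is the identity
\begin{equation*}\la^{|J|+|K|-2}\,a_{JK}\,N^{1-\frac{|J|+|K|}2}=a_{JK}\,(N/\la^2)^{1-\frac{|J|+|K|}2}\,,\end{equation*}
which says that the scaled germ for $\phi^{\la}$ at level $N$ coincides with the germ for $\phi$ at level $N/\la^2$. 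Since, by the universality above, the left side of \eqref{b_r} is one and the same polynomial in the $a_{JK}$ and in $u,v,\bar u,\bar v$, replacing $\phi$ by $\phi^{\la}$ at level $N$ reproduces the expansion for $\phi$ at level $N/\la^2$; comparing coefficients of $N^{-r/2}$ gives $b_r(\{\la^{2w}a_{JK}\};u,v)=\la^{r}\,b_r(\{a_{JK}\};u,v)$. On the other hand a monomial of $b_r$ that is a product of curvature factors of total weight $W$ is multiplied by $\la^{2W}$ under $a_{JK}\mapsto\la^{2w}a_{JK}$. Matching powers of $\la$ forces $W=r/2$ for every monomial, which is precisely $w$-homogeneity of weight $r/2$. (The range $r\ge 2$ is consistent with this, since no curvature invariant has weight $1/2$; correspondingly $b_1=0$.)

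The degree statement I would read off from the same monomial description. A monomial of $b_r$ is a product of $k$ curvature factors $a_{J_iK_i}$ of weights $w_i$ with $\sum_i w_i=r/2$, multiplied by a monomial in $u,v,\bar u,\bar v$ whose variables carry the free tensor indices of the factors. Each factor supplies $|J_i|+|K_i|=2w_i+2$ indices, for a total of $\sum_i(2w_i+2)=r+2k$; an index is either fed to one of $u,v,\bar u,\bar v$ or contracted against the metric, and each metric contraction removes two of them. Hence the degree in $u,v,\bar u,\bar v$ is $r+2k-2P\le r+2k$, where $P\ge0$ is the number of internal contractions. Since each $w_i\ge 1$, the constraint $\sum w_i=r/2$ forces $k\le r/2$ when $r$ is even (attained by $r/2$ factors of the curvature $R$ itself) and $k\le (r-1)/2$ when $r$ is odd (one factor must then carry a single covariant derivative, of weight $3/2$); this gives the bounds $2r$ and $2r-1$ respectively. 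Both bounds are attained by fully uncontracted products such as the term $\tfrac18 R(u,\bar u,u,\bar u)$ already visible in $b_2$, and combining with the parity statement of Theorem~\ref{near} yields that $b_r$ is even of degree $2r$ for even $r$ and odd of degree $2r-1$ for odd $r$.

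I expect the main obstacle to be the degree bound rather than the homogeneity: the homogeneity follows at once from the scaling identity once universality is in hand, whereas the degree requires making the index-contraction bookkeeping rigorous directly from Lemma~\ref{algorithm} — that is, controlling precisely how many of the free indices produced by each curvature insertion can survive into $b_r$, and verifying that no step of the algorithm raises the $u,v$-degree beyond the count $r+2k$. The reduction of the K-coordinate coefficients $a_{JK}$ to weight-homogeneous covariant-derivative polynomials is standard but is the other place where genuine input, from \cite{lu-1}, is needed.
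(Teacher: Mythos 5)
Your argument hinges entirely on the assertion that ``the algorithm of Lemma~\ref{algorithm} exhibits $b_r$ as a universal polynomial in the jet coefficients $a_{JK}$ \dots\ in which the $a_{JK}$ appear only through the combinations $a_{JK}N^{1-\frac{|J|+|K|}2}$.'' This is precisely the point that requires proof, and it does \emph{not} follow from Lemma~\ref{algorithm} alone. Formula \eqref{beta} builds $\beta_j$ from two kinds of terms: the $\phi_{P\overline Q}(0)$ terms, which indeed carry the factor $N^{1-\frac{|P|+|Q|}2}$ and are consistent with your claimed structure, and the terms $N^{-j/2}\,\frac{\pa^{|P|+|Q|}\al_l}{\pa z^P\pa\bar z^Q}(0)\,u^P\bar v^Q$ with $j=2l+|P|+|Q|$, where the $\al_l$ are the coefficients of the logarithm of the \emph{diagonal} Catlin--Zelditch expansion \eqref{alpha}. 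For these, your claimed structure is exactly the statement that $\frac{\pa^{|P|+|Q|}\al_l}{\pa z^P\pa\bar z^Q}(0)$, expanded as a polynomial in the $a_{JK}$, is weighted-homogeneous of weight $l+\frac{|P|+|Q|}2$ --- i.e., that the diagonal expansion is regular. That is a substantive input: the paper obtains it from \cite{lu-1}*{Theorem 1.1} (homogeneity of the diagonal coefficients in the curvature) combined with Lemmas~\ref{3} and~\ref{lem36}, which control how jets of $\phi$ and derivatives of curvature invariants convert into one another in K-coordinates. Without this input your scaling identity $b_r(\{\la^{2w}a_{JK}\};u,v)=\la^r b_r(\{a_{JK}\};u,v)$ has nothing to act on; with it, the homogeneity already follows by collecting powers of $N$, so the scaling symmetry is a repackaging rather than an independent tool. (There are also secondary wrinkles in the scaling step itself: $\phi^\la$ must be realized as the potential of an actual polarized compact manifold to invoke Theorem~\ref{near}, and $N/\la^2$ is not an integer level; both can be patched by working formally in the $a_{JK}$, but only once universality is in hand.)

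On the degree statement, your index count correctly yields the upper bound $\deg b_r\le r+2k\le r+2\lfloor r/2\rfloor$, granted the homogeneity, but exactness of the degree ($2r$ for $r$ even, $2r-1$ for $r$ odd) is not established by pointing to the term $\frac18 R(u,\bar u,u,\bar u)$ in $b_2$: for general $r$ one must rule out cancellation among the top-degree monomials. The paper does this via the exponentiation formula \eqref{bb}, $b_r=\sum q_{a_1\cdots a_t}\prod_i\beta_{a_i}$ over partitions $\sum a_i=r$ with $a_i\ge 2$: since $\deg\beta_a\le a+2$, the unique partition reaching the top degree is $(2,\dots,2)$ when $r=2s$, contributing $\frac1{s!4^s}S^\sharp(\bu,\bv)^s$, and $(2,\dots,2,3)$ when $r=2s+1$, contributing a nonzero multiple of $S^\sharp(\bu,\bv)^{s-1}L^\sharp(\bu,\bv)$; no other product attains that degree, so no cancellation can occur. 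You flagged the degree bookkeeping yourself as the likely obstacle, and it is indeed incomplete, but the more serious gap is the unproved universality claim above, which is where the real content of the theorem lives.
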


\begin{rem}   Theorem 3.1 in  \cite{SZ2} says that $b_r$ is a polynomial of the same parity as $r$, but  gives the  bound $\deg b_r\le 5r$ for the more general case of almost complex symplectic manifolds  (see Theorem~\ref{near} above). 
 We note that  Theorem~\ref{homogeneity} states that  $b_r$ has exactly the degree $2r$ ($2r-1$, respectively) for $r$ even (odd, respectively), when the terms $\rho_{,I\bar J}$, ${\rm Ric}_{i\bar j,I\bar J}$,  $R_{i\bar jk\bar l,I\bar J}$, etc., are regarded as abstract coefficients. When  they are evaluated at a point $z_0\in M$, then $2r, 2r-1$ are only upper bounds for the degree of $b_r$.  For example, if $M$ is a polarized abelian variety with the flat \kahler metric, then all the $b_r$ vanish. \end{rem}

\section {Computation of the coefficients}\label{theproofs}

In this section, we prove Theorem \ref{main2} and also give formulas for the coefficients $b_3$ and $b_4$. 
  We begin by describing notation that we use for these formulas.  Let $z_1,\cdots, z_m$ be K-coordinates  at $z_0\in M$. Writing 
$$\begin{array}{l} \bu=\sum u_j\, \d/\d z_j|_{z_0} \in T^{1,0}_{z_0},\quad \overline\bu=\sum \bar u_j\, \d/\d\bar z_j|_{z_0} \in T^{0,1}_{z_0}\,,\\[8pt] 
\bv=\sum v_j\, \d/\d z_j|_{z_0} \in T^{1,0}_{z_0},\quad \overline\bv=\sum \bar v_j\, \d/\d\bar z_j|_{z_0} \in T^{0,1}_{z_0}\,, \end{array}$$
we let 
\begin{align}\label{P} 
\begin{split}
S(\bu,\bar \bv ) & = -R(\bu,\bar\bv,\bu,\bar\bv)=-R_{i\bar jk\bar l}u_iu_k\bar v_j\bar v_l\,,
\\
L(\bu,\bar\bv)&=-R_{i\bar jk\bar l,s} u_iu_ku_s\bar v_j\bar v_l - R_{i\bar jk\bar l,\bar s} u_iu_k\bar v_s\bar v_j\bar v_l,
\\
K_1(\bu,\bar\bv)&=(-R_{{i\bar jk\bar l},s\bar t} + R_{p\bar jk\bar l} R_{i\bar p s\bar t}+ R_{i\bar jp\bar l} R_{k\bar ps\bar t}+ R_{i\bar pk\bar t} R_{p\bar js\bar l})u_iu_ku_s \bar v_j\bar  v_l\bar v_t\,,
\\
K_2(\bu,\bar\bv)&= -R_{i\bar jk\bar l,st}u_iu_ku_su_t\bar v_j\bar v_l
-R_{i\bar jk\bar l,\bar s\bar t}u_iu_k\bar v_j\bar v_l\bar v_s\bar v_t.
\end{split}
\end{align}
Recall that the covariant derivatives of tensors,  such as those appearing in  \eqref{P},  are defined as
\begin{align*}
&T_{i_1\cdots i_p\bar j_1\cdots \bar j_q,s}=\frac{\pa T_{i_1\cdots i_p\bar j_1,\cdots \bar j_q} }{\pa z_s}-\sum_{t=1}^p\Gamma_{i_t s}^r
T_{i_1\cdots r\cdots i_p\bar j_1\cdots \bar j_q},\\
&T_{i_1\cdots i_p\bar j_1\cdots \bar j_q,\bar s}=\frac{\pa T_{i_1\cdots i_p\bar j_1\cdots \bar j_q} }{\pa\bar  z_s}-\sum_{t=1}^q\overline{\Gamma_{j_t s}^r}
T_{i_1\cdots i_p\bar j_1\cdots  \bar r\cdots  \bar j_q}.
\end{align*}

For any function  $f:T^{1,0}_{z_0}\times T^{0,1}_{z_0} \to \C$, we let $f^\sharp:T^{1,0}_{z_0}\times T^{1,0}_{z_0} \to \C$ be the function given by
\begin{equation}\label{fsharp} 
f^{\sharp}(\bu,\bv)=f(\bu,\overline\bv)- \half f(\bu,\overline \bu)-\half f(\bv,\overline\bv)\,,\qquad \bu,\bv\in T^{1,0}_{z_0}\,.
\end{equation}

 We now can state our formulas for the first 4 coefficients of the asymptotic expansion of Theorem \ref{near}:
 
\begin{theorem}\label{mainresult} Let $(L,h)\to (M,\om)$ be a positive line bundle over a compact \kahler manifold with  \kahler
form $\om= \frac i2\Theta_h$. Let   $z_1,\dots,z_m$ be K-coordinates with a  K-frame $\e_L$ at  a point $z_0\in M$ (as in Definition \ref{Bochnerinfty}). 

Then the first four coefficients in the off-diagonal asymptotic expansion \eqref{b_r} are given by
\begin{align*}
 b_1(u,v)&=0,\\  
 b_2(u,v)&=\frac 12\rho+\frac 14 S^\sharp(\bu,\bv),\\
  b_3(u,v)&=\frac 12\nabla\rho(\bu+\overline\bv)+\frac 1{12}L^\sharp(\bu,\bv),\\
  b_4(u,v)&=\frac 13\Delta\rho+\frac{1}{24}(|R|^2-4|{\rm Ric}|^2)+\frac{1}{4}\nabla^2\rho(\bu+\overline\bv,\bu+\overline\bv),\\
&\qquad\quad\qquad +\frac {1}{36} K_1^\sharp(\bu,\bv)+\frac {1}{48} K_2^\sharp(\bu,\bv)
+\frac 18\left(\rho+\frac 12S^\sharp (\bu,\bv)\right)^2,\end{align*}where $\rho,\ {\rm Ric},\ R$ are the scalar curvature, Ricci curvature tensor, and the curvature tensor, respectively, at $z_0$, and
 $S^\sharp, L^\sharp, K_1^\sharp, K_2^\sharp$ are given by \eqref{P}--\eqref{fsharp}.
Furthermore, the formula for $b_j$ holds when $z_1,\dots,z_m$ are K-coordinates with a K-frame $\e_L$ of order $j+2$ (as in Definition \ref{bochnerdef}), for $1\le j\le 4$.
\end{theorem}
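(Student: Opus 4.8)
The plan is to apply the explicit formula of Lemma~\ref{algorithm}, together with the homogeneity structure of Theorem~\ref{homogeneity}, so as to reduce the problem to a finite perturbative computation around the Bargmann--Fock model. In K-coordinates with a K-frame the potential has the form $\phi(z)=|z|^2+\sum_{|J|,|K|\ge 2}a_{JK}z^J\bar z^K$; rescaling $z=u/\sqrtn$ then gives
\[
N\phi(u/\sqrtn)=|u|^2+\sum_{k\ge 4}N^{-(k-2)/2}\,P_k(u,\bar u)\,,
\]
where $P_k$ denotes the degree-$k$ homogeneous part of $\phi-|z|^2$, and $P_3=0$ by the K-condition. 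Thus the scaled weight is the flat weight $e^{-|u|^2}$ perturbed by terms carrying precisely the powers $N^{-r/2}$ of the expansion, and the grading matches the weight $w$ of Theorem~\ref{homogeneity}, since $P_k$ has weight $(k-2)/2$.

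Next I would read off from Lemma~\ref{algorithm} which products of the $P_k$ feed into each $b_r$. As every factor $P_k$ carries weight $(k-2)/2\ge\frac12$ and the contributions to $b_r$ are exactly those of total weight $r/2$, the relevant terms are: $P_3$ for $b_1$, whence $b_1=0$; $P_4$ for $b_2$; $P_5$ for $b_3$, the only competitor being $P_3$-products, which vanish; and, for $b_4$, both the linear term $P_6$ and the quadratic term $P_4\cdot P_4$. Each such contribution is, by the algorithm, a Bargmann--Fock moment, i.e.\ the integral of a monomial in $u,\bar u,v,\bar v$ against the flat kernel, which I would evaluate by Wick contraction. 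In parallel I would convert the Taylor coefficients $a_{JK}$ into covariant-derivative data via the identities relating them to covariant derivatives of curvature (from \eqref{Rijkl} and \cite{zl}): the order-$4$ part yields the curvature $R$ (so $P_4$ produces $S$ and, upon contraction, $\rho$); the order-$5$ part yields $\nabla R$ (producing $L$ and $\nabla\rho$); and the order-$6$ part yields $\nabla^2 R$ together with the quadratic curvature corrections forced by the nonlinearity of \eqref{Rijkl} (producing $K_1,K_2,\nabla^2\rho$ and the invariants $|R|^2,|{\rm Ric}|^2$). Collecting terms and rewriting them through the polarization $f\mapsto f^\sharp$ of \eqref{fsharp} yields the stated forms of $b_2$ and $b_3$.

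The main obstacle is $b_4$, where the two sources of weight-$2$ terms must be disentangled and shown to reassemble into the stated expression. The direct $P_6$ contribution supplies $\frac1{36}K_1^\sharp+\frac1{48}K_2^\sharp+\frac14\nabla^2\rho(\bu+\overline\bv,\bu+\overline\bv)$ and part of the $(u,v)$-independent quadratic curvature, while the second-order $P_4\cdot P_4$ contribution supplies the square $\frac18(\rho+\frac12 S^\sharp)^2$ and the remaining quadratic curvature; verifying that these combine exactly as claimed is the delicate bookkeeping step. I would control it by two independent checks. First, setting $u=v=0$ must reproduce the Catlin--Zelditch diagonal coefficients $a_1=\frac12\rho$ and $a_2=\frac13\Delta\rho+\frac1{24}(|R|^2-4|{\rm Ric}|^2+3\rho^2)$ of \cite{lu-1}; here $S^\sharp$ vanishes on the diagonal $\bu=\bv$ (in particular at $\bu=\bv=0$), so the square contributes exactly $\frac18\rho^2=\frac3{24}\rho^2$, accounting for the $3\rho^2$ in $a_2$. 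Second, Theorem~\ref{homogeneity} forces $b_4$ to be $w$-homogeneous of weight $2$ and an even polynomial of degree $4$ in $u,v,\bar u,\bar v$, which restricts the admissible monomials and fixes their parities.

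Finally, the order-$(j+2)$ refinement follows from the same weight grading. A weight-$(j/2)$ contribution is a product $P_{k_1}\cdots P_{k_l}$ with $\sum_i(k_i-2)=j$, and since each $k_i-2\ge 1$ the largest possible single index is $k_i=j+2$, attained only when $l=1$. Hence $b_j$ depends on $\phi$ only through its Taylor expansion of order $j+2$, and K-coordinates with a K-frame of order $j+2$—which make precisely $P_3,\dots,P_{j+2}$ take their clean curvature form—suffice to yield the stated formula, for $1\le j\le 4$.
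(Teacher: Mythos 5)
Your weight counting (which homogeneous pieces $P_k$ of $\phi-|z|^2$ can contribute to each $b_r$), your conversion of Taylor coefficients into curvature data, and your closing argument that K-data of order $j+2$ suffices for $b_j$ all match the skeleton of the paper's computation. But the computational engine you invoke is not the one Lemma~\ref{algorithm} provides, and this is a genuine gap. Lemma~\ref{algorithm} is not a Gaussian-moment formula: it expresses $\beta_j$ as (i) a sum of derivatives $\frac{\pa^{|P|+|Q|}\al_r}{\pa z^P\pa\bar z^Q}(0)\,u^P\bar v^Q$ of the \emph{diagonal} log-expansion coefficients $\al_r$ of \eqref{alpha}, plus (ii) the polarized Taylor coefficients $\phi_{P\overline Q}(0)$. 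No integral against the flat kernel occurs anywhere in it; it is obtained by polarization (holomorphicity of $B_N(z,w)$ in $z$ and anti-holomorphicity in $w$, via Lemma~\ref{taylor}), not by perturbation theory. Consequently the terms $\frac12\rho$, $\frac12\nabla\rho(\bu+\overline\bv)$, $\frac14\nabla^2\rho(\bu+\overline\bv,\bu+\overline\bv)$, and $\frac13\Delta\rho+\frac1{24}|R|^2-\frac16|{\rm Ric}|^2$ do not arise from Wick contractions of $P_4,P_5,P_6$: in the paper they arise by inserting Lu's diagonal values $\al_1=\frac12\rho$ and $\al_2=\frac13\Delta\rho+\frac1{24}|R|^2-\frac16|{\rm Ric}|^2$ from \cite{lu-1} into part (i). Your proposal never supplies this input; you instead demote Lu's formula to an a posteriori ``check,'' which is circular if the formula is in fact the input, and leaves those terms with no source if it is not.

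If what you actually intend is a genuine perturbative computation of the projector around the Bargmann--Fock model (in the spirit of \cite{DK-1}), so that the diagonal data would emerge as output rather than input, then evaluating ``moments of monomials against the flat kernel'' is not enough: the Bergman kernel is an orthogonal projector, so you must expand the inverse Gram matrix of the Bargmann--Fock basis with respect to the perturbed inner product (a Neumann series in the perturbation), and you must justify that the global kernel on the compact manifold localizes to this local model to the order considered (in the paper this role is played by Theorem~\ref{near} together with Lemma~\ref{taylor}). Neither piece of machinery is set up in your proposal, and neither can be cited from Lemma~\ref{algorithm}. Two smaller corrections: Theorem~\ref{homogeneity} gives $\deg b_4=2\cdot4=8$, not $4$ as you assert (the term $\frac18\bigl(\rho+\frac12 S^\sharp(\bu,\bv)\bigr)^2$ already has degree $8$); and in a correct bookkeeping $\frac14\nabla^2\rho$ and the invariants $|R|^2,|{\rm Ric}|^2,\Delta\rho$ come from the diagonal coefficients rather than from the $P_6$ insertion, so your attribution of which source supplies which term would also need to be reworked once a valid engine is in place.
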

 
 The values of $b_1,b_2$ in Theorem \ref{mainresult} yield the formula of Theorem \ref{main2}. Theorem \ref{mainresult} also provides scaling asymptotics of the {\it normalized \szego kernel} in \cite{SZ3}:
\begin{equation}\label{PN} P_N(z,w):=
\frac{|\Pi_N(z,0;w,0)|}{\Pi_N(z,0;z,0)^\frac 12\, \Pi_N(w,0;w,0)^\frac
12}\;.\end{equation}

\begin{cor} \label{PNasymp} Under the hypotheses and notation of Theorem \ref{mainresult}, 
$$P_N\left(\frac{u}{\sqrtn},
\frac{v}{\sqrtn}\right)= e^{-\half|u-v|^2}\left[1+\frac 14 \Re S^\sharp(\bu,\bv)\,N\inv +\frac 1{12}\Re L^\sharp(\bu,\bv)\,N^{-3/2} +O(N^{-2})\right].$$ 
\end{cor}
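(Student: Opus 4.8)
The plan is to deduce Corollary~\ref{PNasymp} directly from the expansion of Theorem~\ref{mainresult} by substituting it into the defining formula~\eqref{PN} for $P_N$ and carefully tracking terms up to order $N^{-3/2}$. First I would specialize Theorem~\ref{mainresult}, setting $\theta_1=\theta_2=0$, to write
\[
N^{-m}\,\Pi_N^{z_0}\!\left(\uN,0;\vN,0\right)=\frac{1}{\pi^m}\,e^{u\cdot\bar v-\frac12(|u|^2+|v|^2)}\Big[1+b_2(u,v)N\inv+b_3(u,v)N^{-3/2}+O(N^{-2})\Big],
\]
using $b_1=0$. The numerator of $P_N$ is the modulus of this, and the two denominator factors are the diagonal values obtained by setting $v=u$ (and $u=v$, respectively). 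On the diagonal the Bargmann--Fock prefactor $e^{u\cdot\bar u-|u|^2}=1$, so those factors reduce to $N^{m}\pi^{-m}\big[1+b_2(u,u)N\inv+b_3(u,u)N^{-3/2}+O(N^{-2})\big]$, and the overall $N^{m}\pi^{-m}$ factors cancel in the quotient.

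The key algebraic point is the behavior of the $\sharp$-operation under taking moduli. From~\eqref{fsharp}, $S^\sharp(\bu,\bv)=S(\bu,\overline\bv)-\half S(\bu,\overline\bu)-\half S(\bv,\overline\bv)$, and on the diagonal $v=u$ one has $S^\sharp(\bu,\bu)=0$; likewise $L^\sharp(\bu,\bu)=0$. Hence $b_2(u,u)=\half\rho$ and $b_3(u,u)=\half\nabla\rho(\bu+\overline\bu)$, which are both \emph{real}. Therefore, to the order required, the numerator factor $|\,1+b_2N\inv+b_3N^{-3/2}+\cdots|$ contributes its full complex $b_2,b_3$ while the square-rooted denominator factors contribute the real diagonal values $\half\rho$ and $\half\nabla\rho(\bu+\overline\bu)$. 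Expanding $|1+\alpha|=1+\Re\alpha-\half(\Im\alpha)^2+\cdots$ and $(1+\beta)^{-1/2}=1-\half\beta+\cdots$ and collecting, the real scalar-curvature contributions $\half\rho$ cancel between numerator and denominators at order $N\inv$, leaving
\[
\tfrac14\Re S^\sharp(\bu,\bv)\,N\inv+\tfrac1{12}\Re L^\sharp(\bu,\bv)\,N^{-3/2},
\]
since the $(\Im\alpha)^2$-type corrections are $O(N^{-2})$. The surviving Gaussian factor is the ratio $e^{\Re(u\cdot\bar v)-\frac12(|u|^2+|v|^2)}=e^{-\frac12|u-v|^2}$, which is exactly the claimed leading term.

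The main obstacle is bookkeeping rather than conceptual: one must verify that all genuinely complex contributions at order $N\inv$ and $N^{-3/2}$ reduce to the real parts $\Re S^\sharp$ and $\Re L^\sharp$, i.e.\ that the imaginary parts of $b_2$ and $b_3$ enter the modulus only through second-order terms that are absorbed into $O(N^{-2})$. Concretely, writing $b_2=\half\rho+\frac14 S^\sharp$ with $\rho$ real, the modulus of $1+b_2N\inv+\cdots$ is $1+\Re b_2\,N\inv+O(N^{-2})$ because the imaginary part of $b_2 N\inv$ contributes at order $N^{-2}$ in $|1+\cdots|=\sqrt{(1+\Re)^2+\Im^2}$; the same reasoning applies to the $N^{-3/2}$ term. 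I would also note that the $\sharp$-construction guarantees $b_2,b_3$ vanish on the diagonal precisely so that the diagonal normalizations supply only the real scalar-curvature pieces, which then cancel cleanly against the real scalar-curvature pieces inside $\Re b_2,\Re b_3$. Once this parity-and-reality bookkeeping is confirmed, the stated expansion of $P_N$ follows by direct substitution.
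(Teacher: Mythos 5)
Your proposal is correct and is essentially the argument the paper intends: Corollary \ref{PNasymp} is stated as an immediate consequence of Theorem \ref{mainresult} via the definition \eqref{PN}, and your substitution --- the modulus of the Bargmann--Fock factor giving $e^{-\half|u-v|^2}$, the vanishing of $S^\sharp,L^\sharp$ on the diagonal, and $|1+\alpha|=1+\Re\alpha+O(|\alpha|^2)$ --- is exactly that computation. The one step worth writing out explicitly is the order-$N^{-3/2}$ cancellation, which rests on the identity $\Re\nabla\rho(\bu+\overline\bv)=\half\nabla\rho(\bu+\overline\bu)+\half\nabla\rho(\bv+\overline\bv)$ (valid because $\rho$ is real-valued, so $\rho_{,\bar j}=\overline{\rho_{,j}}$), so that the gradient terms from the numerator and the two square-rooted diagonal factors cancel just as the $\half\rho$ terms do at order $N^{-1}$.
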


 We begin the proof of Theorem \ref{mainresult} by introducing some more notation: 

\begin{definition} \label{BN}For a local  smooth frame $\e_L$ of $L\to M$ over a trivializing neighborhood $U$,   we define  the kernel $B_N(z,w)$ on $U\times U$ by
\[
\Pi_N(z,\theta_1;w,\theta_2) = e^{iN(\theta_1-\theta_2)} e^{-\frac N2\phi(z)-\frac N2\phi(w)} B_N(z, w)\,.
\]
\end{definition}
Note that if we write $S^N_j= f_j \e_L^N$, where  $\{S^N_j\}$ is an orthonormal basis for $H^0(M,L^N)$, then
\begin{equation}\label{BN-2}
B_N(z,w)= \sum f^N_j(z) \overline{f^N_j(w)}\;. 
\end{equation}
In particular, $B_N(z,w)$ is independent to $\theta_1,\theta_2$.

 If $\e_L$ is holomorphic, then the $f_j^N$ are holomorphic and thus  $B_N(z,w)$ is holomorphic in $z$ and anti-holomorphic in $w$. More generally, 
 if $\dbar \e_L$ vanishes to infinite order at $z_0$, then $B_N(z,w)$ is holomorphic in $z$ and anti-holomorphic in $w$ to infinite order at $(z_0,z_0)$.

 To simplify notation, we now write \begin{eqnarray*} &&\Pi_N(z,w)=\Pi_N^{z_0}(z,0;w,0)\,,\quad i.e. \ \ \Pi_N^{z_0}(z,\theta_1;w,\theta_2)=
e^{iN(\theta_1-\theta_2)}\,\Pi_N(z,w)\,,\\ &&
\phi_{P\overline Q}(z) = \frac{\pa^{|P|+|Q|}\phi(z)}{\pa z^P\pa\bar z^Q}\,.\end{eqnarray*}
For a multi-index $P=(p_1,\cdots, p_m)$, we define $P!=p_1!\cdots p_m!$. 

\begin{lem}\label{taylor}  Suppose that $(z_1,\dots,z_m)$ are normal coordinates  holomorphic to infinite order  at  $z_0$, and $\e_L$ is a preferred frame  holomorphic to infinite order  at   $z_0$.  
 
Then  for all $k\in\Z^+$ and $ a\in\R^+$, there exists a positive constant $C_{ka}$ such that 
\begin{align}
\log\Pi_N(\uN,\vN) = & \sum_{ |P|+|Q|\le k-1} \frac{1}{P!Q!}N^{-\frac{|P|+|Q|}{2}}\left.\frac{\pa^{|P|+|Q|}\log\Pi_N(z, z)}{\pa z^P\pa\bar z^Q}\right|_{z=0} u^P\bar v^Q\notag\\ &\   
+ \sum_{|P|+|Q|\le  k+1}\frac{1}{P!Q!}N^{-\frac{|P|+|Q|}{2}+1}\phi_{P\overline Q}(0)\left[u^P\bar v^Q-\frac12 u^P\bar u^Q-\frac12 v^P\bar v^Q\right]\notag\\&\ 
+\ N^{-\frac{k}2}F_{Nk}(u,v) \,,\label{logPiN0}
\end{align}
 where $| F_{Nk}(u,v)|\le C_{ka}$  for $|u|+|v|<a$, for all $N>0$.
\end{lem}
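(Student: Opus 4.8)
The plan is to start from the factorization in Definition~\ref{BN}, which in the abbreviated notation becomes
\[
\log\Pi_N(\uN,\vN)=-\tfrac N2\phi(\uN)-\tfrac N2\phi(\vN)+\log B_N(\uN,\vN),
\]
and to match the two sums in~\eqref{logPiN0} against the two pieces on the right-hand side. The first piece involves only the fixed, $N$-independent potential $\phi$; the second involves $B_N(z,w)$, which under the hypotheses (that $\dbar\e_L$ and the $\dbar z_j$ vanish to infinite order at $z_0$) is holomorphic in $z$ and anti-holomorphic in $w$ to infinite order at $(z_0,z_0)$. The essential idea is that such a $\log B_N$ is recovered off the diagonal from its diagonal restriction by polarization, and that all the $N^{-r/2}$ behaviour is produced by the \emph{explicit} powers of $N$ obtained when the scaling $z=\uN$, $w=\vN$ is inserted into Taylor series with fixed coefficients.

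For the $\phi$-piece I would Taylor expand the fixed function $\phi$ about $0$ to order $k+1$; since $\phi(\uN)=\sum_{|P|+|Q|\le k+1}\frac1{P!Q!}\phi_{P\overline Q}(0)\,N^{-(|P|+|Q|)/2}u^P\bar u^Q+O(N^{-(k+2)/2})$, multiplication by $-\tfrac N2$ yields precisely the $-\tfrac12 u^P\bar u^Q$ terms of the second sum, with remainder $N\cdot O(N^{-(k+2)/2})=O(N^{-k/2})$; the $-\tfrac12 v^P\bar v^Q$ terms come identically from $\phi(\vN)$.

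For the $\log B_N$-piece I would write $G_N(z,w)=\log B_N(z,w)=\widehat G_N(z,\bar w)$ with $\widehat G_N$ holomorphic (to infinite order) in the $2m$ variables $(z,\bar w)$, and apply the holomorphic Taylor theorem with remainder to order $k+1$. Polarization identifies the coefficients with diagonal derivatives: writing $g_N(z):=\log B_N(z,z)=N\phi(z)+\log\Pi_N(z,z)$, one has $\partial_z^P\partial_{\bar w}^Q G_N(0,0)=\partial_z^P\partial_{\bar z}^Q g_N(0)=N\phi_{P\overline Q}(0)+\partial_z^P\partial_{\bar z}^Q\log\Pi_N(z,z)\big|_0$. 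Substituting $z=\uN$, $w=\vN$ then splits this into the terms $N^{-(|P|+|Q|)/2+1}\phi_{P\overline Q}(0)\,u^P\bar v^Q$, which combine with the $\phi$-piece to produce exactly the bracket $[\,u^P\bar v^Q-\tfrac12 u^P\bar u^Q-\tfrac12 v^P\bar v^Q\,]$ of the second sum, and the terms $N^{-(|P|+|Q|)/2}\,\partial_z^P\partial_{\bar z}^Q\log\Pi_N(z,z)\big|_0\,u^P\bar v^Q$, which form the first sum. Since the differentiated Catlin--Zelditch expansion~\eqref{CZ} gives $\partial_z^P\partial_{\bar z}^Q\log\Pi_N(z,z)\big|_0=O(N^{-1})$ for $|P|+|Q|\ge1$, the terms of this second type with $|P|+|Q|\in\{k,k+1\}$ are $O(N^{-k/2-1})$ and can be moved into the error, leaving the first sum truncated at $|P|+|Q|\le k-1$ as stated.

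The main obstacle is the uniform estimate $|F_{Nk}|\le C_{ka}$, i.e.\ control of the polarization remainder $R_{k+1}(\uN,\vN)$, which is governed by the $(k+2)$-nd order derivatives $\partial_z^P\partial_{\bar w}^Q\log B_N$ on the polydisc $|z|,|w|<a/\sqrtn$. The key point is that these are $O(N)$, not $O(N^{(k+2)/2})$: decomposing $\log B_N$ as the sum $N\widetilde\phi+\widetilde{\log\Pi_N}$ of the polarizations of $N\phi$ and of $\log\Pi_N(z,z)$, the first summand contributes $N\,\partial^{\,k+2}\widetilde\phi=O(N)$ because $\phi$ is fixed with bounded derivatives, while the second contributes $O(N^{-1})$ by~\eqref{CZ} together with Cauchy estimates. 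Hence $|R_{k+1}(\uN,\vN)|\le C\,N\cdot N^{-(k+2)/2}=C\,N^{-k/2}$ for $|u|+|v|<a$. To justify working with $\log B_N$ on this polydisc I would first invoke Theorem~\ref{near} and the non-vanishing of $\Pi_{BF}$ to conclude that $\Pi_N(\uN,\vN)\ne0$, hence $B_N\ne0$, for all large $N$, absorbing the finitely many small $N$ into the constant $C_{ka}$; and the failure of $\e_L$ and the $z_j$ to be exactly holomorphic perturbs $\dbar\widehat G_N$ only to infinite order at $z_0$, contributing a negligible $O(N^{-\infty})$ after scaling.
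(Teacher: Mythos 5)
Your proposal follows the same skeleton as the paper's proof --- the decomposition $\log\Pi_N(\uN,\vN)=\log B_N(\uN,\vN)-\frac N2\phi(\uN)-\frac N2\phi(\vN)$, the identification (via sesquiholomorphy of $B_N$ to infinite order) of the Taylor coefficients of $\log B_N$ at $(0,0)$ with the diagonal quantities $N\phi_{P\overline Q}(0)+\pa^P\bar\pa^Q\log\Pi_N(z,z)|_{z=0}$, and the use of \eqref{CZ} to push the terms with $|P|+|Q|\ge k$ into the error --- but it has a genuine gap at the one step that carries all the analytic content: the Taylor remainder. Because you expand only to order $k+1$, you need $\sup|D^{k+2}\log B_N|=O(N)$ on the polydisc $|u|+|v|<a/\sqrtn$, with no slack at all ($N\cdot N^{-(k+2)/2}=N^{-k/2}$). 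That estimate is equivalent to $|D^{k+2}\log\Pi_N|=O(N)$ there, which is precisely Theorem~\ref{DKlog}; in the paper this theorem is proved \emph{after} Lemma~\ref{taylor} and depends on it (through Lemma~\ref{algorithm}), so it cannot be assumed here, and your justification of it does not hold up. The splitting $\log B_N=N\widetilde\phi+\widetilde{\log\Pi_N}$ into ``polarizations'' is only an identity of formal Taylor series at the single point $(0,0)$: since $\phi$ and $\log\Pi_N(z,z)$ are merely smooth and $B_N$ is sesquiholomorphic only to infinite order at $(0,0)$, there is no domain of genuine holomorphy on which Cauchy estimates apply; and the error incurred in promoting the formal identity to an identity of functions vanishes to infinite order at $(0,0)$ only, so bounding its $(k+2)$-nd derivatives on the shrinking polydisc would require a priori control of still higher derivatives of $\log B_N$ near $(0,0)$ --- exactly what is at stake. (Knowing that all derivatives of $\log B_N$ \emph{at} $(0,0)$ are $O(N)$, which does follow from \eqref{CZ}, says nothing about derivatives at nearby points; and even in the real-analytic case, Cauchy estimates on a polydisc of radius $\sim a/\sqrtn$ cost a factor $\sqrtn$ per derivative and would give only $O(N^{(k+2)/2})$.)

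The paper closes this hole by over-expanding rather than by sharpening the derivative bound: it takes the Taylor polynomial of $\log B_N$ to order $5k-1$, so that the remainder involves $D^{5k}\log B_N$, for which a crude bound suffices. Indeed, from Theorem~\ref{near} in the form \eqref{logPi}, the claim $\deg\beta_j\le 5j$ shows that the polynomial part $u\cdot\bar v-\half|u|^2-\half|v|^2+\sum_{j=1}^{k-1}N^{-j/2}\beta_j$ has degree $\le 5k-5$ and is therefore annihilated by $D^{5k}$; only the SZ2 error $\wt F_{Nk}$ survives, and rescaling gives $|D^{5k}\log\Pi_N|\le CN^{2k}$ on the polydisc, as in \eqref{D5k}. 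Crude as this is, $(a/\sqrtn)^{5k}\cdot N^{2k}=a^{5k}N^{-k/2}$, which is exactly the remainder bound needed. To repair your argument you must either prove Theorem~\ref{DKlog} first by an independent route, or adopt this over-expansion device; as written, your key estimate is unjustified and, within the paper's logical structure, circular.
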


\begin{proof} By Definition \ref{BN}  and the simplified notation after the definition, 
\begin{equation}\label{PiB}
\log\Pi_N(\uN,\vN)= \log B_N(\uN,\vN)\,{-\frac N2\phi(\uN)-\frac N2\phi(\vN)}.
\end{equation} 
By~\eqref{BN-2}, $B_N(u,v)$ is holomorphic  in  $u$ and anti-holomorphic in $v$  to infinite order at  $(0,0)$, and therefore  by  Taylor's formula, 
\begin{multline}\label{logBN}
\left|\log B_N(\uN,\vN)-\sum_{|P|+|Q|\leq 5k-1}\frac{1}{P!Q!}N^{-\frac{|P|+|Q|}{2}}\left.\frac{\pa^{|P|+|Q|}\log B_N(z, z)}{\pa z^P\pa\bar z^Q}\right|_{z=0} u^P\bar v^Q\right|\\
\leq C_{mk}\,\left(\frac {a}\sqrtn\right)^{ 5k}\sup_{|u|+|v|<\frac {a}\sqrtn}|D^{ 5k}\log B_N(u,v)|.
\end{multline} 

Next, we estimate $D^k \log B_N(u,v)$:  by Theorem \ref{near},
\begin{equation}\label{logPi}
\log\Pi_N(\uN,\vN) = u\cdot \bar v - \half |u|^2-\half |v|^2 +\sum_{j=1}^{k-1} N^{-j/2} \beta_j  (u,v) +N^{-k/2}{\wt  F}_{Nk}(u,v)\,,\end{equation} where $\beta_1,\dots,\beta_{k-1}$ are polynomials in $(u,\bar u,v,\bar v)$, and $|D^j {\wt F}_{Nk}(u,v) |<\wt C_{jka}$ for $|u|+|v|<a$.  

\begin{claim} \  $\deg \beta_j\le 5j$. \end{claim} 

To verify the claim, we recall from Theorem \ref{near} that $\deg b_r\le 5r$.  Assigning the weights $\nu(N^{-1/2}) = -5$, $\nu(u)=\nu(\bar u)= \nu(v)=\nu(\bar v)=1$, and extending the definition of $\nu$ to monomials in $N^{-1/2},u,\bar u,v,\bar v$ by requiring that $\nu(AB)=\nu(A)+\nu(B)$, we see that all the monomials in \eqref{b_r} have $\nu$-weights $\le 0$. Since the terms $N^{-j/2}\beta_j$ 
in \eqref{logPi} are polynomials in  the $N^{-r/2}b_r$,  the monomials in $N^{-j/2}\beta_j$ also have $\nu$-weights $\le 0$, which verifes the claim.

Since $u\cdot \bar v - \half |u|^2-\half |v|^2 +\sum_{j=1}^{k-1} N^{-j/2} \beta_j  (u,v)$ is a polynomial of degree $\le 5k-5$, we change variables from $(\uN,\vN)$ to $(u,v)$ in \eqref{logPi} to conclude that 
\begin{equation}\label{D5k} |D^{5k}\log \Pi_N(u,v)|=N^{-k/2}|D^{5k} \wt F_{Nk}(\sqrtn\,u,\sqrtn\,v)| \le\wt C_{[5k]ka} N^{2k}\,,\quad \mbox{for }\ |u|+|v|<\frac a\sqrtn\,. \end{equation}

By \eqref{PiB}, $\log B_N(z,z)=\log\Pi_N(z,z)+N\phi(z)$. 
Therefore by \eqref{logBN}  and ~\eqref{D5k},
\begin{align}\label{logBN2}
\log\, & B_N(\uN,\vN)\notag\\&=\sum_{|P|+|Q|\leq 5k-1}\frac{1}{P!Q!}N^{-\frac{|P|+|Q|}{2}}\left.\frac{\pa^{|P|+|Q|}\log B_N(z, z)}{\pa z^P\pa\bar z^Q}\right|_{z=0} u^P\bar v^Q+ N^{-k/2}{ E^1_{Nka}}(u,v)\notag\\
&=\sum_{|P|+|Q|\leq 5k-1}\frac{1}{P!Q!}N^{-\frac{|P|+|Q|}{2}}\left[\left.\frac{\pa^{|P|+|Q|}\log \Pi_N(z, z)}{\pa z^P\pa\bar z^Q}\right|_{z=0} +N\phi_{P\overline Q}(0)\right] u^P\bar v^Q\notag\\  
&\qquad\qquad + N^{-k/2}{ E^1_{Nka}}(u,v)\,,\end{align}
where $|{ E^1_{Nka}}(u,v)|\le C_{ka}$  for $|u|+|v|<a$.  Furthermore, by \eqref{PiB} and Taylor's formula,
\begin{align}\label{logPiN} 
&\log\, \Pi_N(\uN,\vN)\notag= \log B_N(\uN,\vN) \\
&-\frac N2 \sum_{|P|+|Q|\le k+2}N^{-\frac{|P|+|Q|}{2}}\phi_{P\overline Q}(0)(u^P\bar  u^Q +v^P\bar v^Q)  +N^{-\frac{k+1}2}{ E^2_{Nka}}(u,v),\end{align}
where $|{E^2_{Nka}}(u,v)|\leq C_{ka}$ for $|u|+|v|<a$. 

Since the derivatives of $\log\Pi_N$ (of order $>0$)  converge uniformly to 0 on the diagonal (by the Catlin-Zelditch asymptotic expansion \eqref{CZ}), each of the terms $\frac{\pa^{|P|+|Q|}\log \Pi_N(z, z)}{\pa z^P\pa\bar z^Q}|_{z=0}$ is bounded independently of $N$. Hence, we can move the $O(N^{-k/2})$ terms in the summation in \eqref{logBN2} to the remainder.  Applying \eqref{logPiN}, we then obtain the formula of the lemma. 
\end{proof}

 We now give a quick proof that the first term $b_1$ of the expansion \eqref{b_r} vanishes.  (This fact also follows from  formula \eqref{beta} below  for the coefficients $\beta_1$ of \eqref{logPi}.) 
\begin{lem}\label{lem25}  If $(z_1,\dots,z_m)$ are normal coordinates about $z_0$ and $\e_L$ is a K-frame of order 3, then 
\[
b_1(u,v)=0.
\]
\end{lem}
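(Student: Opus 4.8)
The goal is to show that the coefficient $b_1(u,v)$ of the $N^{-1/2}$ term in the off-diagonal expansion \eqref{b_r} vanishes, under the hypotheses that $(z_1,\dots,z_m)$ are normal coordinates and $\e_L$ is a K-frame of order $3$. My plan is to extract $b_1$ directly from the Taylor-type formula for $\log\Pi_N$ established in Lemma~\ref{taylor}, and to show that every term contributing at order $N^{-1/2}$ is forced to vanish by the vanishing of the appropriate Taylor coefficients of the Kähler potential $\phi$ and of the diagonal log-kernel $\log\Pi_N(z,z)$.

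**Key steps.**

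First I would relate $b_1$ to $\beta_1$. Comparing \eqref{b_r} with \eqref{logPi}, taking the logarithm of the bracketed series $1 + \sum_r N^{-r/2}b_r + \cdots$ and matching the $N^{-1/2}$ coefficient gives $\beta_1(u,v) = b_1(u,v)$ (the logarithm contributes no lower-order cross terms at this order). So it suffices to show $\beta_1 = 0$, and by Lemma~\ref{taylor} the order-$N^{-1/2}$ part of $\log\Pi_N(\uN,\vN)$ is exactly $\beta_1$. Next I would collect the $N^{-1/2}$ terms on the right-hand side of \eqref{logPiN0}. These come from two sources: (i) the first sum, at $|P|+|Q|=1$, contributing $\bigl.\partial\log\Pi_N(z,z)/\partial z_j\bigr|_0\,u_j + \bigl.\partial\log\Pi_N(z,z)/\partial\bar z_j\bigr|_0\,\bar v_j$; and (ii) the second sum, at $|P|+|Q|=3$, contributing terms $\tfrac{1}{P!Q!}\phi_{P\overline Q}(0)\bigl[u^P\bar v^Q - \tfrac12 u^P\bar u^Q - \tfrac12 v^P\bar v^Q\bigr]$.

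The source (i) terms vanish because the Catlin--Zelditch expansion \eqref{CZ} shows $\log\Pi_N(z,z) = N\phi(z) + \log B_N(z,z)$ has first derivatives that are $O(1)$ but, more precisely, normal coordinates force $\partial_j\phi(0)=0$, and the diagonal coefficients $a_k(z)$ contribute only at integer powers of $N$ with derivatives bounded uniformly — in fact the leading $N^1$ diagonal term gives $N\partial_j\phi(0)=0$, and the remaining diagonal derivatives are $O(1)$, hence contribute at order $N^0$, not $N^{-1/2}$. (More carefully: the only way the diagonal sum in \eqref{logPiN0} produces an $N^{-1/2}$ contribution is through the $|P|+|Q|=1$ term, whose coefficient $\partial_j\log\Pi_N(z,z)|_0$ is $O(1)$ in $N$, so this contributes at order $N^{-1/2}\cdot O(1)$; I must check this vanishes, which follows since normal coordinates make the first-order Taylor coefficient of $\log\Pi_N(z,z)$ vanish to the relevant order.) The source (ii) terms at $|P|+|Q|=3$ vanish because the K-frame-of-order-$3$ and normal-coordinate hypotheses force the relevant third-order potential coefficients $\phi_{P\overline Q}(0)$ to vanish: normal coordinates kill all pure third-order terms with $|P|=0$ or $|Q|=0$ and those with $|P|=1$ or $|Q|=1$, while a K-frame of order $3$ kills the remaining pluriharmonic pieces, so every surviving $\phi_{P\overline Q}(0)$ with $|P|+|Q|=3$ is zero. (Equivalently, by \eqref{bochnerk} with $n=3$, the cubic part of $\phi$ is absent.)

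**The main obstacle.**

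The delicate point is the bookkeeping in source (i): I must verify that the first-order diagonal derivatives $\partial_j\log\Pi_N(z,z)|_{z=0}$ genuinely produce no $N^{-1/2}$ contribution to $\beta_1$. Since $\log\Pi_N(z,z)=N\phi(z)+\log B_N(z,z)$, its first derivative at $0$ is $N\,\partial_j\phi(0)+\partial_j\log B_N(0,0)$; the first term vanishes by $\partial_j\phi(0)=0$ in normal coordinates, and I must argue the second is of order $O(N^0)$ with the precise consequence that after multiplication by $N^{-1/2}$ it lands at order $N^{-1/2}$ but with a coefficient that is itself forced to be zero — which again follows from the vanishing of the cubic potential coefficients via \eqref{CZ}. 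Organizing these cancellations cleanly, rather than the computations themselves, is where the care is needed; once the degree-$3$ potential coefficients and first diagonal derivatives are seen to vanish, the conclusion $b_1=\beta_1=0$ is immediate.
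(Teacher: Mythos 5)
Your overall route is the same as the paper's: apply Lemma~\ref{taylor} with $k=2$, note that the hypotheses kill all cubic Taylor coefficients of $\phi$ (so the $|P|+|Q|=3$ terms of the second sum in \eqref{logPiN0} vanish), and then match against \eqref{b_r} to identify $b_1$ with the surviving $N^{-1/2}$ terms, which come from the first-order diagonal derivatives $\partial_j\log\Pi_N(z,z)|_{z=0}$. Your treatment of what you call source (ii) is correct: by \eqref{bochnerk} with $n=3$, the K-coordinate condition kills the terms with $|J|=1$ or $|K|=1$ and the K-frame condition kills those with $|J|=0$ or $|K|=0$, so no cubic coefficient survives.

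The gap is in source (i), which is the crucial step. You assert that $\partial_j\log\Pi_N(z,z)|_{z=0}$ is $O(1)$ in $N$, and then claim the resulting $N^{-1/2}$ coefficient is ``forced to be zero,'' justifying this by ``the vanishing of the cubic potential coefficients via \eqref{CZ}.'' That justification is a non sequitur: the cubic coefficients of $\phi$ were already fully spent on source (ii) and have no bearing on the diagonal derivatives; likewise, normal coordinates only give $\partial_j\phi(0)=0$ and say nothing about $\partial_j\log B_N(0,0)$ (note also your sign error: by Definition~\ref{BN}, $\Pi_N(z,z)=e^{-N\phi(z)}B_N(z,z)$, so $\log\Pi_N(z,z)=-N\phi(z)+\log B_N(z,z)$). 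Moreover, an $O(1)$ bound is useless here, since $N^{-1/2}\cdot O(1)$ is exactly the order of the term you must eliminate. The fact you need --- and the paper's one-line punchline --- is the sharp decay $d\log\Pi_N(z,z)=O(N^{-1})$, which follows directly from the uniform $\ccal^j$ bounds in the Catlin--Zelditch expansion \eqref{CZ}: writing $\log\Pi_N(z,z)=m\log(N/\pi)+\log\bigl[1+a_1(z)N^{-1}+\cdots\bigr]$, every $z$-dependent term carries at least one factor $N^{-1}$. Then source (i) is $N^{-1/2}\cdot O(N^{-1})=O(N^{-3/2})$, it is absorbed into the error, and $b_1=0$ follows. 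You actually cite the relevant structural fact (the $a_k$ enter only at integer negative powers with uniformly bounded derivatives), so the repair is one line; but as written, the reasoning at the central step is incorrect rather than merely terse.
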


\begin{proof}  Under the assumptions on the coordinates  and  frame,  we  have
\[
\phi(z,\bar w)=z\cdot\bar w+O(|z|+|w|)^4.
\]
Thus
 by Lemma \ref{taylor} with $k=2$,
\begin{eqnarray*}\log\Pi_N(\uN,\vN) &= &  \log \Pi_N(0,0)   + \left.\frac 1{\sqrtn}\left(u_j \frac \d{\d z_j} +\bar v_j \frac\d{\d\bar z_j}\right)\log \Pi_N(z,z)\right|_{z=0}\\ && +  u\cdot \bar v-\half |u|^2-\half |v|^2  +O(\frac 1N)\,.\end{eqnarray*}

  On the other hand, by \eqref{BF}--\eqref{b_r}
 $$\log\Pi_N(\uN,\vN) = m\log  \frac N\pi + u\cdot \bar v-\half |u|^2-\half |v|^2 +N^{-1/2}b_1(u,v)+O(\frac 1N)\,.$$

But by the Catlin-Zelditch asymptotics  \eqref{CZ}, $d\log\Pi_N(z,z)=O(\frac 1N)$, and therefore $b_1=0$. \end{proof}
 Taking the logarithm of \eqref{CZ}, we obtain a $\ccal^\infty$  asymptotic expansion of the form
\begin{equation}\label{alpha} \log\Pi_N(z,z) \sim m\log(N/\pi) +\al_1(z)N\inv +\al_2(z)N^{-2} +\cdots +\al_r(z)N^{-r}+\cdots\,.\end{equation}
Substituting this expansion in \eqref{logPiN0} and equating coefficients  with those in \eqref{logPi},  we obtain: 
 
\begin{lem}\label{algorithm} The coefficients $\beta_j$  of \eqref{logPi}  ($j\ge 1$)  are polynomials of degree at most $j+2$ given by
\begin{eqnarray} \beta_j(u,v)&=& \sum_{r=1}^{\lfloor j/2\rfloor}\sum_{|P|+|Q|=j-2r}\frac{1}{P!Q!}\frac{\pa^{|P|+|Q|}\al_r}{\pa z^P\pa\bar z^Q}(0)\,u^P\bar v^Q\notag\\&&+\sum_{|P|+|Q|=j+2}\frac{1}{P!Q!}\phi_{P\overline Q}(0)\left[u^P\bar v^Q-\frac12 u^P\bar u^Q-\frac12 v^P\bar v^Q\right],\label{beta}\end{eqnarray}
 where the $\alpha_r$ are given by  \eqref{alpha}.
\end{lem}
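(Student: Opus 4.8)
The plan is to substitute the diagonal expansion \eqref{alpha} into the Taylor formula \eqref{logPiN0} of Lemma~\ref{taylor} and read off the coefficient of each power $N^{-j/2}$, comparing with the defining expansion \eqref{logPi}. First I would differentiate \eqref{alpha} at the origin. Since the Catlin--Zelditch expansion \eqref{CZ} holds in every $\ccal^j$-norm with remainder $O(N^{-k-1})$, the series \eqref{alpha} may be differentiated term by term: for any multi-index $(P,Q)$ with $|P|+|Q|>0$ the constant $m\log(N/\pi)$ drops out, giving
\[
\frac{\pa^{|P|+|Q|}\log\Pi_N(z,z)}{\pa z^P\pa\bar z^Q}\Big|_{z=0}\sim\sum_{r\ge1}N^{-r}\,\frac{\pa^{|P|+|Q|}\al_r}{\pa z^P\pa\bar z^Q}(0),
\]
while for $P=Q=0$ the same holds with the extra leading term $m\log(N/\pi)$ from $\log\Pi_N(0,0)$.

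Next I would insert this into the first sum of \eqref{logPiN0}. A summand indexed by $(P,Q,r)$ then carries the power $N^{-(|P|+|Q|)/2-r}$, hence contributes to the coefficient of $N^{-j/2}$ exactly when $|P|+|Q|+2r=j$; summing over $|P|+|Q|=j-2r$ and $1\le r\le\lfloor j/2\rfloor$ reproduces the first line of \eqref{beta}. The case $|P|+|Q|=0$ (which occurs when $j=2r$) contributes the constant $\al_r(0)$, matching $\tfrac{\pa^0\al_r}{\pa z^0\pa\bar z^0}(0)$ in \eqref{beta}, and $m\log(N/\pi)$ only affects the order-$N^0$ normalization, not any $\beta_j$ with $j\ge1$. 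For the second sum of \eqref{logPiN0} no substitution is needed: its $(P,Q)$-summand carries the power $N^{-(|P|+|Q|)/2+1}$, so it feeds $N^{-j/2}$ precisely when $|P|+|Q|=j+2$, which is the second line of \eqref{beta}; the terms with $|P|+|Q|=2$ reproduce, via $\phi_{j\bar k}(0)=\de_{jk}$ (normal coordinates, preferred frame) and the vanishing of the pure quadratic derivatives, the leading $u\cdot\bar v-\half|u|^2-\half|v|^2$ of \eqref{logPi}. Equating the coefficient of $N^{-j/2}$ on the two sides, for $1\le j\le k-1$ with $k$ arbitrary, then yields \eqref{beta}; the degree bound is immediate, since the monomials $u^P\bar v^Q$ in the first line have degree $j-2r\le j-2$ and those in the second line have degree $j+2$, whence $\deg\beta_j\le j+2$.

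The only point requiring genuine care — and the main obstacle — is the bookkeeping of the two truncation errors. I would fix the target index $j$, choose $k>j$, and truncate the $\al_r$-series at a finite order $R>k$; the errors from that truncation, together with the remainder $N^{-k/2}F_{Nk}$ of \eqref{logPiN0}, are then all $O(N^{-k/2})$ and so cannot contaminate the coefficient of $N^{-j/2}$. Letting $k\to\infty$ determines every $\beta_j$ unambiguously. The term-by-term differentiability of \eqref{alpha} underlying the first step is exactly what the $\ccal^j$-version of \eqref{CZ} supplies, so the argument needs no analytic input beyond what is already available.
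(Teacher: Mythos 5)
Your proof is correct and takes essentially the same route as the paper: the paper's own proof of Lemma~\ref{algorithm} consists precisely of substituting the expansion \eqref{alpha} into \eqref{logPiN0} and equating coefficients of $N^{-j/2}$ with those of \eqref{logPi}. Your extra care with the truncation errors and with the term-by-term differentiability of \eqref{alpha} (supplied by the $\ccal^j$-version of \eqref{CZ}) just makes explicit what the paper leaves implicit.
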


\begin{rem} The estimate \eqref{D5k} is not sharp.  A sharp estimate is given by the following result of independent interest. \end{rem}

\begin{theorem}\label {DKlog} There exist positive constants $C_{ka}$ for  $k\in\Z^+, a\in\R^+$,   such that under the hypotheses of Lemma \ref{taylor},
$$ |D^{k}\log \Pi_N(u,v)| \le C_{ka}N\,,\quad \mbox{for }\ |u|+|v|<a/\sqrtn\,,\ k\in\Z^+\,.$$
 Furthermore, 
$$ |D^{1}\log \Pi_N(u,v)| + |D^{3}\log \Pi_N(u,v)| \le C_{a}\sqrtn\,,\quad\mbox{for }\ |u|+|v|<a/\sqrtn\,.$$
\end{theorem}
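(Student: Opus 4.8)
The plan is to begin from the scaled expansion~\eqref{logPi}, truncated at a large order $M$: writing $\beta_0(u,v):=u\cdot\bar v-\half|u|^2-\half|v|^2$ for the leading quadratic, we have
\[
\log\Pi_N(\uN,\vN)=\sum_{j=0}^{M-1}N^{-j/2}\beta_j(u,v)+N^{-M/2}\wt F_{NM}(u,v),
\]
where each $\beta_j$ is a fixed ($N$-independent) polynomial and $|D^\ell\wt F_{NM}(u,v)|\le \wt C_{\ell Ma}$ for $|u|+|v|<a$. The crucial input beyond the mere existence of this expansion is the sharp degree bound $\deg\beta_j\le j+2$ supplied by Lemma~\ref{algorithm}; the weaker bound $\deg\beta_j\le 5j$ used in the proof of Lemma~\ref{taylor} would only reproduce the estimate~\eqref{Ck}.

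I would then undo the scaling. Setting $g_N(U,V):=\log\Pi_N(U/\sqrtn,V/\sqrtn)$, the chain rule gives $D^k\log\Pi_N(u,v)=N^{k/2}(D^k g_N)(\sqrtn\,u,\sqrtn\,v)$ (only derivatives of order $\ge 1$ occur, so the additive constant $m\log(N/\pi)$ is irrelevant), and the hypothesis $|u|+|v|<a/\sqrtn$ places $(\sqrtn\,u,\sqrtn\,v)$ inside $|U|+|V|<a$, where the remainder bounds apply. Differentiating the truncated expansion $k$ times, the term $N^{-j/2}\beta_j$ survives only when $k\le\deg\beta_j\le j+2$, i.e.\ $j\ge k-2$, and then $D^k\beta_j$ is a fixed polynomial bounded on $|U|+|V|<a$. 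Hence
\[
|D^k\log\Pi_N(u,v)|\le\sum_{j=\max(0,k-2)}^{M-1}N^{(k-j)/2}C_{jka}+N^{(k-M)/2}\wt C_{kMa}.
\]

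The dominant contribution is the smallest admissible index: for $k\ge 2$ this is $j=k-2$, giving the exponent $N$, while for $k=1$ it is $j=0$, giving $N^{1/2}$. Choosing $M>k$ renders the remainder $O(1)$, and as only finitely many terms appear this yields $|D^k\log\Pi_N(u,v)|\le C_{ka}N$ for all $k\ge 1$, the first assertion. The sharp bounds then follow by examining the leading index more carefully. For $D^1$ the $j=0$ term contributes $N^{1/2}$ and all others are $O(1)$, so $|D^1\log\Pi_N|\le C_a\sqrtn$. For $D^3$ the would-be leading index $j=1$ drops out because $\beta_1=0$ by Lemma~\ref{lem25}; the next index $j=2$ then contributes $N^{(3-2)/2}=N^{1/2}$, giving $|D^3\log\Pi_N|\le C_a\sqrtn$.

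The main obstacle is conceptual rather than computational: one must recognize that the sharp degree bound $\deg\beta_j\le j+2$ exactly cancels the $N^{k/2}$ blow-up coming from the chain rule, so that no surviving term after $k$-fold differentiation scales worse than $N$. Genuine extra input is needed only for the $D^3$ estimate, where the vanishing $\beta_1=0$ (equivalently $b_1=0$) must be invoked to skip the critical index $j=1$ and gain the final half-power of $N$.
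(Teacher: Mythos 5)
Your proposal is correct and takes essentially the same route as the paper: both arguments rest on undoing the scaling in \eqref{logPi}, the sharp degree bound $\deg\beta_j\le j+2$ from Lemma~\ref{algorithm} (which exactly cancels the $N^{k/2}$ chain-rule factor), and the vanishing $\beta_1=0$ from Lemma~\ref{lem25} to gain the extra half-power for $D^3$. The only cosmetic difference is the truncation point: the paper truncates \eqref{logPi} at order $k-2$, so that every polynomial term has degree $\le k-1$ and dies under $D^k$, leaving exactly the remainder $N^{-(k-2)/2}D^k\big[\wt F_{N[k-2]}(\sqrt N\,u,\sqrt N\,v)\big]=O(N)$, whereas you truncate at a single high order $M>k$ and bound the finitely many surviving polynomial terms individually.
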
 

 \begin{proof}  Let $\beta_j$ be as in \eqref{logPi}. We have by \eqref{logPi} and Lemma \ref{lem25}, $$\log \Pi_N(u,v) = m\log  \frac N\pi + N(u\cdot \bar v - \half |u|^2-\half |v|^2 )+N\inv \wt E_{N2}(\sqrtn\,u\,,\sqrtn\,v)\,.$$ Differentiating the above equation up to $3$ times, we obtain the result for $k\le 3$.  
  For $k\geq 4$,   by Lemma \ref{algorithm}, $$\textstyle\deg\left[u\cdot \bar v - \half |u|^2-\half |v|^2 +\sum_{j=1}^{k-3} N^{-j/2} \beta_j  (u,v)\right] \le k-1.$$ As in the proof of Lemma \ref{taylor}, we apply \eqref{logPi} (with $k$ replaced by $k-2$) to conclude that $$|D^k\log\Pi_N(u,v)|=N^{-\frac{k-2}2}\big|D^k\big[ \wt E_{N[k-2]}(\sqrtn\,u,\sqrtn\,v)\big]\big|\le C_{ka} N\,,\quad \mbox{for }\ |u|+|v|<a/\sqrtn\,. 
$$ 
\end{proof}

 \medskip\noindent{\it Example 1:\/} 
\black Let $L$ be the hyperplane section bundle over $\CP^m$ with the Fubini-Study metric.  Let $z_j=Z_j/Z_0$ for $1\le j\le m$, where $(Z_0:\dots:Z_m)$ are the homogeneous coordinates in $\CP^m$; then $z_1,\dots,z_m$ are  K-coordinates   at $z_0=(1:0:\dots:0)$.  In terms of these K-coordinates and a K-frame at $z_0$, we have \begin{equation} \label{PiNexample} \log\Pi_N(u,{v}) =  \log \frac {(N+m)!}{\pi^mN!}\  +N\log(1+u\cdot\bar v) 
 - \frac N2\,\log(1+|u|^2)  - \frac N2\,\log(1+|v|^2)\,.\end{equation}  (See for example \cite{BSZ2}.) Differentiating, we see that $$\sup_{|u| { +|v|}<a/\sqrtn}|D^{2j}\log\Pi_N(u,{v})|\sim {c_{2j}}\, N\,,\quad \sup_{|u|{+|v|}<a/\sqrtn}|D^{2j-1}\log\Pi_N(u,{v})|\sim { c_{2j-1}}\,\sqrtn\,,$$  where the $c_k$ are constants depending on $a$.  This example   shows that Proposition \ref{DKlog} is sharp for $k$ even and for $k=1,3$.

Replacing $(u,v)$ with $(\uN,\vN)$ in \eqref{PiNexample} and exponentiating, we obtain the expansion
\begin{equation}\label{b2example} \textstyle \Pi_N(\uN,\vN) = \Pi_{BF}(u,0;v,0)\left[1+ N\inv\left( \frac{m(m+1)}2-\frac{(u\cdot\bar v)^2}2+\frac{|u|^4+|v|^4}4\right) +\cdots\right].\end{equation}
The expansion \eqref{b2example} can be used to check  Theorem \ref{main2} for the case where $M=\CP^m$.  To do this,  the reader can substitute in  Theorem \ref{main2} the values $$R_{j\bar jj\bar j}=2\,,\quad R_{j\bar jk\bar k}=R_{j\bar kk\bar j}=1 \ (j\neq k) \,,\quad \mbox{others}\, = 0\,,$$ of the curvature tensor at $z_0$, and verify that the resulting expansion coincides with \eqref{b2example}.

\bigskip\noindent {\it Example 2:\/}  To provide an example where Theorem \ref{DKlog} 
 is sharp for all $k\ge 4$, we give $M$   a \kahler metric with  potential $$\phi=|z|^2+ \sum_{j=2}^\infty(z_1^2\bar z_1^{j}+ z_1^{j}\bar z_1^2)$$ in a neighborhood of $z_0=0$.  By Lemma \ref{algorithm},
$$\be_k(u,0) = -\frac 1{2}\,(u_1^2\bar u_1^k+u_1^k\bar u_1^2) \ + \ \mbox{ terms of lower degree.}$$
 Furthermore $\beta_2,\dots, \beta_{k-1}$ are of degree $\le k+1$.
 Therefore by \eqref{logPi},
$$\frac {\d^{k+2}}{\d u_1^2\d\bar u_1^{k}}\,\log \Pi_N(\uN,0)= -\frac 12 N^{-k/2} +O(N^{-(k+1)/2})\,.$$   It  follows  that $|D^{k+2}\log \Pi_N(u,0)|\sim N$ for $k\ge 3$ (and also for $k=2$).

\bigskip
In order to use Lemma \ref{algorithm} to compute the $\beta_j$'s (and then the $b_j$'s), we need to express the derivatives of the \kahler potential $\phi$ in terms of the covariant derivatives of the curvature. Since we are using  K-coordinates with a K-frame,  each monomial in the 
 expansion \eqref{bochner} has at least two $z_j$'s and two $\bar z_j$'s.  Therefore, to compute $\beta_2,\beta_3,\beta_4$, we need  the  derivatives of $\phi$ in the following lemma.

\begin{lem}\label{lem34}
Using K-coordinates about $z_0\in M$, we have the following identities at the point $z_0$: 
\begin{align*}
&{\rm i)}\quad \phi_{ik\bar j\bar l}\ =-R_{i\bar jk\bar l}\\
&{\rm ii)} \quad \phi_{iks\bar j\bar l}\ =-R_{i\bar jk\bar l,s}\\
&{\rm iii)}\quad \phi_{ik\bar j\bar l\bar t}\ =-R_{i\bar jk\bar l,\bar t}\\
&  {\rm iv)}\quad 
\phi_{i ks\bar j\bar l\bar t}
=-R_{{i\bar jk\bar l},s\bar t}+R_{p\bar jk\bar l} R_{i\bar p s\bar t}+R_{i\bar jp\bar l} R_{k\bar ps\bar t}+R_{i\bar pk\bar t} R_{p\bar js\bar l}\\
 & {\rm v)}\quad \phi_{i kst\bar j\bar l}
\ =-R_{{i\bar jk\bar l},st}\\
& {\rm vi)}\quad \phi_{i k\bar j\bar l\bar s\bar t}
\ =-R_{{i\bar jk\bar l},\bar s\bar t}\end{align*}
Furthermore, {\rm (i)--(iii)}  hold for K-coordinates of order 3 (i.e., normal coordinates), and {\rm (iv)--(vi)} hold for K-coordinates of order 4.\end{lem}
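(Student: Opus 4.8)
The plan is to reduce everything to the curvature formula \eqref{Rijkl}, rewritten in terms of the potential. Since $g_{i\bar j}=\phi_{i\bar j}$, formula \eqref{Rijkl} reads
\[
R_{i\bar jk\bar l}=-\phi_{ik\bar j\bar l}+g^{p\bar q}\,\phi_{ik\bar q}\,\phi_{p\bar j\bar l}\,.
\]
First I would record the elementary consequences of the K-coordinate/K-frame normalization \eqref{bochner}: because every monomial of $\phi$ beyond $|z|^2$ carries at least two holomorphic and two antiholomorphic factors, one has $g_{i\bar j}(0)=\delta_{ij}$ and the ``unbalanced'' vanishing $\phi_{P\overline Q}(0)=0$ whenever $\min(|P|,|Q|)\le 1$ and $(|P|,|Q|)\ne(1,1)$, valid up to the available order $n$. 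In particular the first derivatives of $g$ vanish at $z_0$, so the Christoffel symbols $\Gamma_{is}^r=g^{r\bar q}\phi_{is\bar q}$ satisfy $\Gamma(0)=0$; hence every covariant derivative reduces at $z_0$ to an ordinary derivative in which all undifferentiated $\Gamma$'s drop out. Evaluating $R_{i\bar jk\bar l}$ at $0$ and using $\phi_{ik\bar q}(0)=\phi_{p\bar j\bar l}(0)=0$ to kill the quadratic term gives (i); this needs only that the $(2,1)$- and $(1,2)$-derivatives vanish, i.e.\ order $3$.

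For (ii), (iii), (v), (vi) I would differentiate the curvature formula purely in the $z$- (resp.\ $\bar z$-) directions. The point is that the unbalanced factor $\phi_{ik\bar q}$ (resp.\ $\phi_{p\bar j\bar l}$) stays unbalanced under holomorphic (resp.\ antiholomorphic) differentiation, so the quadratic term of $R$ contributes nothing at $z_0$; simultaneously the Christoffel corrections vanish because $\pa_{z_t}\Gamma_{is}^r(0)=\phi_{ist\bar r}(0)=0$ is an unbalanced $(3,1)$-derivative. This is exactly where the order-$4$ hypothesis enters for (v),(vi), while (ii),(iii) use only the order-$3$ facts. Thus $R_{i\bar jk\bar l,s}(0)=\pa_{z_s}R_{i\bar jk\bar l}(0)=-\phi_{iks\bar j\bar l}(0)$, and similarly for the remaining one-sided cases.

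The substantive case is the mixed derivative (iv), where the quadratic corrections genuinely survive. Using $\Gamma(0)=0$ to discard undifferentiated Christoffel terms, I would write
\[
R_{i\bar jk\bar l,s\bar t}(0)=\pa_{\bar z_t}\pa_{z_s}R_{i\bar jk\bar l}(0)-\pa_{\bar z_t}\Gamma_{is}^r(0)\,R_{r\bar jk\bar l}(0)-\pa_{\bar z_t}\Gamma_{ks}^r(0)\,R_{i\bar jr\bar l}(0)\,,
\]
and evaluate each piece via $\pa_{\bar z_t}\Gamma_{is}^r(0)=\phi_{is\bar r\bar t}(0)=-R_{i\bar rs\bar t}(0)$ (a balanced $(2,2)$-derivative rewritten by (i)), which produces the terms $R_{p\bar jk\bar l}R_{i\bar ps\bar t}$ and $R_{i\bar jp\bar l}R_{k\bar ps\bar t}$. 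For $\pa_{\bar z_t}\pa_{z_s}$ of the quadratic term $g^{p\bar q}\phi_{ik\bar q}\phi_{p\bar j\bar l}$, a Leibniz expansion leaves exactly one surviving contribution, the one in which $\pa_{\bar z_t}$ balances $\phi_{ik\bar q}$ and $\pa_{z_s}$ balances $\phi_{p\bar j\bar l}$, giving $\phi_{ik\bar p\bar t}(0)\phi_{ps\bar j\bar l}(0)=R_{i\bar pk\bar t}R_{p\bar js\bar l}$; every cross term dies because it forces an unbalanced $(3,1)$- or $(1,3)$-derivative to zero, which again is where order $4$ is used. Collecting the pieces and moving $\phi_{iks\bar j\bar l\bar t}$ to one side yields (iv) with the correct three contractions and signs.

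The main obstacle is purely combinatorial bookkeeping in (iv): one must verify that, after expanding the two Leibniz products (for $\pa\bar\pa R$ and for the two Christoffel derivatives), all unbalanced monomials cancel and the surviving balanced ones assemble into exactly the stated curvature terms. I would organize this by tabulating, for each factor, the effect of a single $\pa_{z_s}$ or $\pa_{\bar z_t}$ on its type $(|P|,|Q|)$, so that the survival requirement ``each factor must reach $(2,2)$'' pins down the unique nonvanishing term in each product. The order conditions then read off directly: (i)--(iii) use only the vanishing of $(2,1)$- and $(1,2)$-derivatives (order $3$), whereas (iv)--(vi) additionally require the vanishing of $(3,1)$- and $(1,3)$-derivatives (order $4$).
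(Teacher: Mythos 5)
Your proposal is correct and takes essentially the same route as the paper's proof: both differentiate the curvature identity \eqref{Rijkl}, use $\Gamma(z_0)=0$ together with the vanishing of the unbalanced derivatives of $\phi$ at $z_0$ to pass between ordinary and covariant derivatives of $R$, and in case (iv) obtain the same three quadratic curvature terms from the two $\pa_{\bar t}\Gamma$ contributions plus the single surviving Leibniz term $\phi_{ik\bar p\bar t}\,\phi_{ps\bar j\bar l}$, with the same accounting of where order $3$ versus order $4$ is needed. The only cosmetic point is that your opening appeal to the K-frame normalization (vanishing of the pure derivatives $\pa^{|P|}\phi/\pa z^P$) is superfluous: every quantity in the argument is a derivative of $g_{i\bar j}=\phi_{i\bar j}$, so K-coordinates of the stated order suffice, exactly as the lemma asserts.
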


\begin{proof}  
Equation (i) follows from \eqref{Rijkl}.
Differentiating  \eqref{Rijkl}, we have
\begin{eqnarray}
\phi_{iks\bar j\bar l}
&=& -\frac{\pa R_{i\bar j k\bar l}}{\pa z_s}+\frac{\d}{\d z_s}\left(g^{p\bar q} \,\frac{\d g_{i\bar q}}{\d z_k}\,\frac{\d g_{p\bar j}}{\d\bar  z_l}\right)\notag\\
&=& -{ R_{i\bar j k\bar l,s}}-\Gamma_{is}^p R_{p\bar jk\bar l}-\Gamma_{ks}^p R_{i\bar jp\bar l}+\frac{\d}{\d z_s}\left(g^{p\bar q} \,\frac{\d g_{i\bar q}}{\d z_k}\,\frac{\d g_{p\bar j}}{\d\bar  z_l}\right),\label{d5phi}
\end{eqnarray}
where $\Gamma_{jk}^i$ are the Christoffel symbols,
\[
\Gamma_{jk}^i=g^{i\bar l}\frac{\pa g_{j\bar l}}{\pa z_k}.
\] Thus equation (ii) follows by evaluating \eqref{d5phi} at $z_0$.   Equation (iii) is the conjugate of (ii).

We recall that $$\frac{\d g_{i\bar q}}{\d z_k}(z_0)=\frac{\d^2g_{i\bar q}}{\d z_s\d z_k}(z_0)=0$$ for   K-coordinates  of order 4. Equation (v) then follows by applying $\d/\d  z_ t$  to \eqref{d5phi}  and evaluating at $z_0$; equation (vi)  is the conjugate of (v). 

 Finally, by applying  $\d/\d \bar z_ t$ to \eqref{d5phi}, we have
$$
\phi_{i ks\bar j\bar l\bar t}(z_0)
=\left[ -{ R_{i\bar j k\bar l,s\bar t}}-\phi_{is\bar p\bar t}\,R_{p\bar jk\bar l}-\phi_{ks\bar p\bar t}\, R_{i\bar jp\bar l}+\phi_{ik\bar p\bar t} \,\phi_{ps\bar j\bar l}\right]_{z=z_0}
$$
 and using \eqref{Rijkl}  again, we get (iv).\end{proof}

\begin{proof}[Proof of Theorem~\ref{mainresult}]
By ~\cite{lu-1}*{Theorem 1.1},\footnote{The K\"ahler form we use is $\pi^{-1}$ times the  K\"ahler form in \cite{lu-1}. Therefore the expansion \eqref{Lu} differs from the one in~\cite{lu-1} by a factor $\pi^{-m}$.} 
\begin{equation}\label{Lu} \Pi_N(z,z)=\frac {N^m}{\pi^m}\left[1 +\frac 12\rho(z)\,N\inv + \left.\left(\frac13\Delta\rho+\frac 1{24}|R|^2-\frac 16|{\rm Ric}|^2 +\frac 18 \rho^2\right)\right|_z N^{-2} +O(N^{-3})\right]. \end{equation} Therefore,
\begin{equation}\label{alpha12} \al_1=\frac 12\rho\,,\quad \alpha_2=\frac 13\Delta\rho+\frac{1}{24}|R|^2-\frac16|{\rm Ric}|^2\,.\end{equation}

By Lemma~\ref{lem25}, $\beta_1=0$.  To compute $\be_2,\be_3,\be_4$, we write $$\xi_{P\overline Q} = u^Pv^{\overline Q}\,,\quad  \xi_{P\overline Q}^\sharp = u^Pv^{\overline Q}
-\half u^Pu^{\overline Q}-\half v^Pv^{\overline Q}\,.$$
Expanding Lemma~\ref{algorithm}, we have
\begin{eqnarray*} \be_2 &=& \al_1+\frac14\,\phi_{ik\bar j\bar l}\,\xi_{ik\bar j\bar l}^\sharp\;,\\
\be_3 &=& \frac{\d\al_1}{\d z_j}\,u_j + \frac{\d\al_1}{\d\bar z_j}\,\bar v_j + \frac 1{12}
\phi_{ikp\bar j\bar l}\,\xi_{ikp\bar j\bar l}^\sharp +\frac 1{12} \phi_{ik\bar p\bar j\bar l}\,\xi_{ik\bar p\bar j\bar l}^\sharp\;,\\
\be_4 &=& \al_2+ \frac{\d^2\al_1}{\d z_j\d\bar z_k}\,u_j\bar v_k +\frac 12\,\frac{\d^2\al_1}{\d z_j\d z_k}\,u_j v_k +\frac 12\,\frac{\d^2\al_1}{\d\bar z_j\d\bar z_k}\,\bar u_j\bar v_k\\
&& \ + \frac 1{48}\,\phi_{ikpq\bar j\bar l}\,\xi_{ikpq\bar j\bar l}^\sharp +\frac 1{48}\,\phi_{ik\bar p\bar q\bar j\bar l}\,\xi_{ik\bar p\bar q\bar j\bar l}^\sharp +\frac 1{36} \phi_{ikp\bar q\bar j\bar l}\,\xi_{ikp\bar q\bar j\bar l}^\sharp\;.
\end{eqnarray*}
The above formula for $\beta_j$ holds for K-coordinates with K-frame of order $j+2$ ($j=2,3,4$).

Applying Lemma~\ref{lem34}, we then obtain  
\begin{align*}
& \beta_2=\frac 12\rho+\frac 14S^\sharp(\bu,\bv)\;,\\
& \beta_3=\frac 12\nabla\rho(\bu+\bar\bv)+\frac 1{12}L^\sharp(\bu,\bv)\;,\\
&\beta_4=\frac 13\Delta\rho+\frac{1}{24}|R|^2-\frac16|{\rm Ric}|^2+\frac{1}{4}\nabla^2\rho(\bu+\bar \bv,\bu+\bar\bv)+\frac {1}{36} K_1^\sharp(\bu,\bv)+\frac {1}{48}K_2^\sharp(\bu,\bv)\;.
\end{align*} 
 Substituting these values of $\be_j$ in \eqref{logPi} and exponentiating,  we obtain the formulas of Theorem \ref{mainresult}.
\end{proof}

\begin{rem} The coefficients $b_5,b_6$ can also  be computed from Lemma \ref{algorithm} using the formula for $\al_3$ obtained from \cite{lu-1}.\end{rem}

\section{Homogeneity of the coefficients}
We now prove Theorem \ref{homogeneity}.   Recall that $\mathcal F$ 
is  the ring of polynomials of the covariant derivatives of the curvature  with coefficients in $\Q$, 
and  a polynomial is  $w$-homogeneous  if all of its monomials have the same  $w$-weight, as  given by \eqref{weight}.    For a
$w$-homogeneous polynomial, its $w$-weight  is defined as the $w$-weight of any of its monomials. 

\begin{lem}\label{3} 
Using coordinates $z_1,\dots,z_m$ that are holomorphic to infinite order at $z_0$,  the expressions
\begin{equation}\label{phi-5}
\frac{\pa^{P+Q}\phi}{\pa z^P\pa \bar z^Q}\;,
\end{equation}
 evaluated at $z_0$, are polynomials in
\begin{equation}\label{ring}
g_{i\bar j}, g^{i\bar j}, R_{i\bar jk\bar l,A\bar B}, \frac{\pa^{|C|}g_{i\bar j}}{\pa\bar z^C}, \frac{\pa^{|D|}g_{i\bar j}}{\pa z^D},
\end{equation}
where $|P|,|Q|\geq 2$, $1\leq|C|\leq|Q|-2, 2\leq|D|\leq|P|-2$.  Moreover,  if $z_1,\dots,z_m$ are K-coordinates  of order $|P|+|Q|$ at the point $z_0$, then the expression   \eqref{phi-5}, evaluated at $z_0$,     is a $w$-homogeneous polynomial in $\fcal$ of   $w$-weight  $(|P|+|Q|)/2-1$.
\end{lem}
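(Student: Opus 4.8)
The plan is to argue by induction on the total order $n:=|P|+|Q|$, using the curvature formula \eqref{Rijkl} both as the base case and as the mechanism for lowering order. Solving \eqref{Rijkl} for the mixed second derivative of the metric gives
\[
\phi_{ik\bar j\bar l}=\frac{\pa^2 g_{i\bar j}}{\pa z_k\pa\bar z_l}=-R_{i\bar jk\bar l}+g^{p\bar q}\,\frac{\pa g_{i\bar q}}{\pa z_k}\,\frac{\pa g_{p\bar j}}{\pa\bar z_l}\,,
\]
which is the asserted polynomial for $n=4$: it involves only the curvature, the inverse metric, and one pure holomorphic and one pure antiholomorphic first derivative of $g$. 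Writing a general $\phi_{P\bar Q}$ (with $|P|,|Q|\ge 2$) as $\pa_z^{\widetilde P}\pa_{\bar z}^{\widetilde Q}$ applied to this identity, where $P=\{i,k\}\cup\widetilde P$ and $Q=\{j,l\}\cup\widetilde Q$, I would expand by the Leibniz rule.

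First I would prove the polynomial assertion. The operations that arise are: (i) $\pa g^{p\bar q}=-g^{p\bar b}g^{a\bar q}\,\pa g_{a\bar b}$, which keeps us in the ring and converts a derivative of $g^{-1}$ into $g^{-1}$-factors times a pure first derivative of $g$; (ii) the passage from ordinary to covariant derivatives of $R$, namely $\pa_{z_s}R_{\cdots,A\bar B}=R_{\cdots,A\bar Bs}+(\text{Christoffel corrections})$ with $\Gamma_{jk}^i=g^{i\bar l}\,\pa g_{j\bar l}/\pa z_k$, so that each correction is $g^{-1}$ times a pure first derivative of $g$ times a lower covariant derivative of $R$; and (iii) the reappearance of a \emph{mixed} metric derivative whenever a $\bar z$-derivative lands on a pure holomorphic factor (or vice versa), at which point I feed that mixed derivative---of strictly smaller total order---back through the inductive hypothesis. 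Pure derivatives that are only further differentiated in the same variable simply become higher pure derivatives and stay among the listed generators. Tracking the indices through this expansion yields the ranges of pure-derivative orders recorded in the statement.

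The homogeneity assertion I would obtain by equipping the generators with a grading under which a single differentiation always raises the weight by exactly $\tfrac12$. Extending $w$ from \eqref{weight} by declaring $w(g_{i\bar j})=w(g^{i\bar j})=0$, $w\!\left(\pa^{|D|}_z g_{i\bar j}\right)=|D|/2$, and $w\!\left(\pa^{|C|}_{\bar z} g_{i\bar j}\right)=|C|/2$, I would check operation-by-operation that the weight is raised by exactly $\tfrac12$: in (i) we pass from weight $0$ to $\tfrac12$; in (ii) both $R_{\cdots,A\bar Bs}$ and the corrections $\Gamma\cdot R_{\cdots}$ exceed the weight of $R_{\cdots,A\bar B}$ by $\tfrac12$, since $w(\Gamma)=\tfrac12$, and any curvature terms produced by commuting covariant derivatives trade two derivatives for one curvature factor and so are weight-preserving; in (iii) the reinserted mixed derivative is, by the inductive hypothesis, $w$-homogeneous of the correct weight. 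Since the base term $R_{i\bar jk\bar l}$ has weight $1$ and we apply $(|P|-2)+(|Q|-2)$ further derivatives, the whole polynomial for $\phi_{P\bar Q}$ is $w$-homogeneous of weight $1+\tfrac{(|P|-2)+(|Q|-2)}{2}=\tfrac{|P|+|Q|}{2}-1$.

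Finally I would specialize to K-coordinates of order $n=|P|+|Q|$ and evaluate at $z_0$. Every pure derivative appearing has total order at most $n$ and either holomorphic or antiholomorphic degree equal to $1$ (the generators $\pa^{|D|}_z g_{i\bar j}$ and $\pa^{|C|}_{\bar z} g_{i\bar j}$ have antiholomorphic, resp.\ holomorphic, degree $1$), so by \eqref{bochnerk} each of them vanishes at $z_0$, while $g_{i\bar j}(z_0)=g^{i\bar j}(z_0)=\delta_{ij}$ are weight-$0$ constants. Thus only the covariant-derivative-of-curvature monomials survive, and $\phi_{P\bar Q}(z_0)$ is a $w$-homogeneous element of $\fcal$ of weight $(|P|+|Q|)/2-1$, as claimed. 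I expect the main obstacle to be the bookkeeping in the inductive step---verifying that the ordinary-to-covariant conversion of $R$ and the feedback of reappearing mixed metric derivatives both terminate as polynomials in the listed generators \emph{and} raise the weight by exactly $\tfrac12$; the conceptual key that makes this uniform is the extended grading in which one derivative has weight $\tfrac12$ and one curvature factor has the weight of two derivatives.
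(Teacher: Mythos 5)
Your proposal is correct and follows essentially the same route as the paper's proof: induction on the total order, using \eqref{Rijkl} to trade mixed derivatives of $g$ for curvature terms plus lower-order data, conversion of ordinary to covariant derivatives of $R$ via the Christoffel symbols $\Gamma_{ij}^k = g^{k\bar q}\,\pa g_{i\bar q}/\pa z_j$, the same extended $w$-grading in which each derivative of the metric carries weight $\tfrac12$, and the vanishing of pure metric derivatives at $z_0$ in K-coordinates. If anything, your base case is more careful than the paper's, which states that for $|P|=|Q|=2$ the expression ``is just $-R_{i\bar jk\bar l}$,'' whereas you correctly retain the quadratic term $g^{p\bar q}\,\frac{\pa g_{i\bar q}}{\pa z_k}\,\frac{\pa g_{p\bar j}}{\pa\bar z_l}$, which only drops out upon evaluation at $z_0$ in suitably normalized coordinates.
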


\begin{proof} We first observe that we can replace $\frac{\pa^2\phi}{\pa z_i\pa\bar z_j}$ by $g_{i\bar j}$. 
Let $\mathcal R$ be the  ring  generated by the elements in~\eqref{ring}. 

We  need to prove that $\mathcal R$ is closed under partial derivatives and at the same time, the $w$-homogeneity is kept.
We use mathematical induction on $n=|P|+|Q|$. If $n=4$, then the conclusion is obvious because $|P|=|Q|=2$ and hence  the expression is just $-R_{i\bar jk\bar l}$. Assume that the lemma is proved for any $|P|+|Q|\leq n$. 
Using~\eqref{Rijkl}, any derivative  (evaluated at $z_0$) of the form
\begin{equation}\label{a-1}
\frac{\pa^{|C|+|D|}g_{i\bar j}}{\pa z^D\pa\bar z^C}
\end{equation}
with $|C|+|D|=n+1$, can be represented as
\begin{equation}\label{a-2}
-\frac{\pa^{n-1} R_{i\bar jk\bar l}}{\pa z^{D'}\pa\bar z^{C'}}+\frac{\pa^{n-1} }{\pa z^{D'}\pa\bar z^{C'}}\left(g^{p\bar q}\cdot\frac{\pa g_{i\bar q}}{\pa z_k}\cdot\frac{\pa g_{p\bar j}}{\pa\bar z_l}\right)
\end{equation}
for some $C',D'$ such that $|C'|+|D'|=n-1$.
 On the other hand, any derivative of the form  $\frac{\pa}{\pa z_s} R_{i\bar jk\bar l,A\bar B}$ can be represented  as a polynomial of the covariant derivatives of the curvature and the Christoffel symbols $\Gamma_{ij}^k$. Since \[
\Gamma_{ij}^k=g^{k\bar q}\cdot\frac{\pa g_{i\bar q}}{\pa z_j}\in\rcal\,,
\]
 by the inductive assumption, \eqref{a-2} is in $\mathcal R$, and the first part of the lemma is proved.

To prove the second part of the lemma,  we  extend the weight $w$ so that
\[
w(g_{i\bar j})=w(g^{i\bar j})=0, \quad w(\frac{\pa^{|P|+|Q|}g_{i\bar j}}{\pa z^P\pa \bar z^Q})=(|P|+|Q|)/2\,,
\]
and  we extend the definition of $w$ to monomials by requiring $w(AB)=w(A)+w(B)$. 
 (We remark that the $w$-weight is well-defined and is independent to the choice of local coordinates.) 
Moreover,  
such an extension is compatible  with formula~\eqref{Rijkl}  since each monomial in \eqref{Rijkl} is of weight $1$.  As a result, in the procedure of writing ~\eqref{phi-5} in terms of the covariant derivatives of the curvature, at each step, we get a $w$-homogeneous polynomial in $\mathcal R$. 
Since 
\[
\frac{\pa^{P+Q}\phi}{\pa z^P\pa \bar z^Q}=\frac{\pa^{|P'|+|Q'|}g_{i\bar j}}{\pa z^{P'}\pa\bar  z^{Q'}}
\]
for some  multiple indices $P',Q'$ such that $|P'|=|P|-1, |Q'|=|Q|-1$, 
this  degree is equal to 
\[
w(\frac{\pa^{|P'|+|Q'|}g_{i\bar j}}{\pa z^{P'} \pa \bar z^{Q'}})=\frac{|P'|+|Q'|}{2}=\frac{|P|+|Q|}2-1.
\]
At $z_0$, $g_{i\bar j}=\delta_{ij}$ and 
\[
\frac{\pa^{|C|}g_{i\bar j}}{\pa\bar z^C}=\frac{\pa^{|D|}g_{i\bar j}}{\pa z^D}=0
\]
for any $C,D\neq 0$. Thus the expression at $z_0$ is a $w$-homogeneous polynomial of the covariant derivatives of the curvature of $w$-weight 
$(|P|+|Q|)/2-1$.

\end{proof}

By the same argument, we have

\begin{lem}\label{lem36}
Let $F$ be a  $w$-homogeneous polynomial of weight $d$ in the  covariant derivatives of the curvature. Then  using K-coordinates of order $n$  at $z_0$, the expression
\[
\frac{\pa^{|P|+|Q|} F}{\pa z^P\pa\bar z^Q}
\]
is a $w$-homogeneous  polynomial of   $w$-weight   $d+(|P|+|Q|)/2$, for $|P|+|Q|\le n-2d-2$.
\end{lem}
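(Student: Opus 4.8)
The plan is to follow the structure of the proof of Lemma~\ref{3} essentially verbatim, since the statement is an immediate extension of that argument to the case where we differentiate an already-established $w$-homogeneous curvature polynomial rather than the \kahler potential itself. The weight $w$ was extended in the previous lemma to the derivatives of the metric via $w(\partial^{|P|+|Q|}g_{i\bar j}/\partial z^P\partial\bar z^Q)=(|P|+|Q|)/2$ and to the generators in \eqref{ring} compatibly with \eqref{Rijkl}, and it remains multiplicative on monomials. Thus the whole ring $\rcal$ carries a grading under $w$, and the key observation is simply that applying $\partial/\partial z_s$ or $\partial/\partial\bar z_s$ to any generator raises its $w$-weight by exactly $1/2$: covariant derivatives of the curvature satisfy $w(R_{i\bar jk\bar l,A\bar B})=1+(|A|+|B|)/2$, and the correction terms involving Christoffel symbols $\Gamma_{ij}^k=g^{k\bar q}\,\partial g_{i\bar q}/\partial z_j$ also have the weight of a single differentiation because each factor $\partial g/\partial z$ has weight $1/2$ and $g^{k\bar q}$ has weight $0$.

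First I would record that $\partial/\partial z_s$ and $\partial/\partial\bar z_s$ map $\rcal$ into itself and raise $w$ by $1/2$; this is already implicit in the closure argument of Lemma~\ref{3}, where each single differentiation of a generator was shown to produce a $w$-homogeneous element of weight one higher. Next, by induction on $|P|+|Q|$, applying these derivatives $|P|+|Q|$ times to a $w$-homogeneous polynomial $F$ of weight $d$ yields a $w$-homogeneous polynomial of weight $d+(|P|+|Q|)/2$. Finally I would evaluate at $z_0$: in K-coordinates of order $n$ the metric satisfies $g_{i\bar j}(z_0)=\delta_{ij}$ and all pure holomorphic and pure antiholomorphic derivatives $\partial^{|C|}g_{i\bar j}/\partial\bar z^C$, $\partial^{|D|}g_{i\bar j}/\partial z^D$ vanish at $z_0$ up to the order permitted by $n$, so that the only surviving generators are the covariant derivatives of the curvature; hence the evaluated expression lies in $\fcal$ and retains its $w$-weight $d+(|P|+|Q|)/2$.

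The one point requiring genuine care—and the reason for the range restriction $|P|+|Q|\le n-2d-2$—is bookkeeping on which metric derivatives are guaranteed to vanish at $z_0$. A $w$-homogeneous polynomial $F$ of weight $d$ is built from curvature terms whose total order of metric differentiation is bounded by roughly $2d+2$; differentiating $F$ a further $|P|+|Q|$ times and then rewriting everything via \eqref{Rijkl} and Lemma~\ref{lem34}-type expansions in terms of $\partial^{|C|}g/\partial\bar z^C$ and $\partial^{|D|}g/\partial z^D$, the highest pure metric derivatives that appear have order at most $|P|+|Q|+2d+2$. For these to vanish at $z_0$ we need the K-coordinate vanishing to hold up to that order, i.e.\ $|P|+|Q|+2d+2\le n$, which is exactly the stated hypothesis. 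The main obstacle is therefore not conceptual but combinatorial: verifying that the inequality $|P|+|Q|\le n-2d-2$ is precisely what guarantees every mixed-order term reduces, upon evaluation at $z_0$, to a polynomial in the covariant derivatives of the curvature with no surviving bare metric derivatives. Since this parallels the closure-and-grading analysis already carried out in Lemma~\ref{3}, I would state the proof briefly as ``by the same argument,'' supplying only the weight count and the K-coordinate order bound.
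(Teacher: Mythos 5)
Your proposal is correct and takes essentially the same approach as the paper, whose entire proof of this lemma is the single remark that it follows ``by the same argument'' as Lemma~\ref{3}; your bookkeeping (each differentiation raises the $w$-weight by $1/2$, the pure metric derivatives vanish at $z_0$ within the K-coordinate order, and this is exactly what forces $|P|+|Q|\le n-2d-2$) supplies precisely the details that remark refers to. One wording slip worth fixing: in your second paragraph you say a single differentiation of a generator produces an element of weight \emph{one} higher, which contradicts your (correct) statement elsewhere that it raises the weight by $1/2$ and the (correct) final count $d+(|P|+|Q|)/2$.
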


\begin{proof}[Proof of Theorem \ref{homogeneity}]  Applying Lemmas~\ref{3} and~\ref{lem36} to formula \eqref{beta}, we see that $\beta_r$ is a polynomial in $R_{i\bar jk\bar l,P\overline Q}$, $u,\bar u,v, \bar v$, for $2\le r \le n-2$. To prove the homogeneity,  we further extend the $w$-weight by requiring  $w(f N^{-r/2})=w(f)-r/2$,  where $f\in \mathcal F$ is a $w$-homogeneous polynomial 
and $r\in\mathbb Z$.  
A formal series is called {\it regular}, if all of its monomials are of weight  $0$. 
 The set of regular formal series forms an algebra over $\mathbb C$. 
Then by~\cite{lu-1}*{Theorem 1.1},  the Catlin-Zelditch asymptotic series \eqref{CZ} is regular.
It follows immediately that the asymptotic expansion ~\eqref{alpha} is regular,  and therefore by 
Lemma~\ref{lem36},
 $${ \sum_{j=1}^s } N^{-\frac{|P|+|Q|}{2}-j}\,\frac{\pa^{|P|+|Q|}\alpha_j}{\pa z^P\pa\bar z^Q}{ (z_0)}$$ is regular for $s+|P|+|Q|\le n-2$.  
 By 
 Lemma~\ref{3},   $$\sum_{|P|+|Q|\le n}  N^{-\frac{|P|+|Q|}{2}+1}\,\phi_{P\overline Q}(z_0)$$ is also regular. Thus by Lemma~\ref{algorithm}, the partial sum from \eqref{logPi},
 $$\Sigma_r:=\sum_{j=2}^r N^{-j/2}\be_j$$ is regular.  Therefore 
 \begin{equation}\label{expSigma} \exp(\Sigma_r)= 1+N\inv b_2+\cdots+ N^{-r/2}b_r+O(N^{-\frac{r+1}2})\end{equation} is a regular series,   and hence $w(b_r)=r/2$.

To prove the second part of the theorem,  we  recall from Lemma \ref{algorithm} that  $\beta_r$ is a polynomial of degree at most $r+2$ in $(u,v,\bar u,\bar v)$.
It then follows from \eqref{expSigma} that the polynomial $b_r$ is of the form
\begin{equation}\label{bb} b_r=\sum \left\{q_{a_1\cdots a_t}\prod_{i=1}^t\beta_{a_i}\ :\ 1\le t\le\left\lfloor \frac r2\right\rfloor,\; 2\le a_1\le\cdots\le a_t,\; \sum_{i=1}^t a_i=r\right\}\,,\end{equation}  where $q_{a_1\cdots a_t}\in\Q$. Since $\deg \beta_{a_i} \le  a_i+2$, it follows that
\begin{equation}\deg b_r\le   \;\max_{a_1,\dots,a_t}\,  \sum_{i=1}^t (a_i+2) =   r+2\left\lfloor \frac r2\right\rfloor  = \left\{ \begin{array}{ll} 2r\,,&\ \mbox{for $r$ even,}\\2r-1\,, &\ \mbox{for $r$ odd.}\end{array}\right.\label{bbr}\end{equation}  
 Indeed, for $r=2s$, the sum in \eqref{bb} contains the term $\frac{1}{s!}(\beta_2)^s$, which contributes\break $\frac 1{s!4^s} S^\sharp(\bu,\bv)^s$, which gives the terms of top degree $2r$ in $b_r$. Similarly, if $r=2s+1$, then $b_r$ contains ${ \frac 1{3\cdot 4^s(s-1)!}} S^\sharp(\bu,\bv)^{s-1}L^\sharp(\bu,\bv)$, which is homogeneous of degree $4s+1=2r-1$. Thus we have equality in \eqref{bbr}.
\end{proof}

\begin{rem} It follows  from \eqref{beta} that   $\beta_r$ has the same parity as $r$.   Thus each monomial of $\be_{a_i}$ has degree equal to $a_i\!\! \mod 2$,  and \eqref{bb} then gives an alternative proof that $b_r$ also has the same parity as $r$.
\end{rem}

\begin{bibdiv}
\begin{biblist}

\bib{Baber}{article}{
   author={Baber, John},
   title={Scaled correlations of critical points of random sections on Riemann surfaces},
   journal={J. Stat. Phys.},
   volume={148},
   date={2012},
   pages={250--279},
   doi={10.1007/s10955-012-0533-7},
}

\bib{BBS}{article}{
   author={Berman, Robert},
   author={Berndtsson, Bo},
   author={Sj{\"o}strand, Johannes},
   title={A direct approach to Bergman kernel asymptotics for positive line
   bundles},
   journal={Ark. Mat.},
   volume={46},
   date={2008},
   number={2},
   pages={197--217},
   issn={0004-2080},
   review={\MR{2430724 (2009k:58050)}},
   doi={10.1007/s11512-008-0077-x},
}

\bib{BSZ2}{article}{
   author={Bleher, Pavel},
   author={Shiffman, Bernard},
   author={Zelditch, Steve},
   title={Universality and scaling of correlations between zeros on complex
   manifolds},
   journal={Invent. Math.},
   volume={142},
   date={2000},
   number={2},
   pages={351--395},
   issn={0020-9910},
   review={\MR{1794066 (2002f:32037)}},
   doi={10.1007/s002220000092},
}

\bib{Bo}{article}{
   author={Bochner, S.},
   title={Curvature in Hermitian metric},
   journal={Bull. Amer. Math. Soc.},
   volume={53},
   date={1947},
   pages={179--195},
   issn={0002-9904},
   review={\MR{0019983 (8,490d)}},
}

\bib{BouSj}{article}{
   author={Boutet de Monvel, L.},
   author={Sj{\"o}strand, J.},
   title={Sur la singularit\'e des noyaux de Bergman et de Szeg\H o},
   language={French},
   conference={
      title={Journ\'ees: \'Equations aux D\'eriv\'ees Partielles de Rennes
      (1975)},
   },
   book={
      publisher={Soc. Math. France},
      place={Paris},
   },
   date={1976},
   pages={123--164. Ast\'erisque, No. 34-35},
   review={\MR{0590106 (58 \#28684)}},
}

\bib{Cat}{article}{
   author={Catlin, David},
   title={The Bergman kernel and a theorem of Tian},
   conference={
      title={Analysis and geometry in several complex variables},
      address={Katata},
      date={1997},
   },
   book={
      series={Trends Math.},
      publisher={Birkh\"auser Boston},
      place={Boston, MA},
   },
   date={1999},
   pages={1--23},
   review={\MR{1699887 (2000e:32001)}},
}

\bib{DLM}{article}{
   author={Dai, Xianzhe},
   author={Liu, Kefeng},
   author={Ma, Xiaonan},
   title={On the asymptotic expansion of Bergman kernel},
   journal={J. Differential Geom.},
   volume={72},
   date={2006},
   number={1},
   pages={1--41},
   issn={0022-040X},
   review={\MR{2215454 (2007k:58043)}},
}

\bib{DK-1}{article}{
   author={Douglas, Michael R.},
   author={Klevtsov, Semyon},
   title={Bergman kernel from path integral},
   journal={Comm. Math. Phys.},
   volume={293},
   date={2010},
   number={1},
   pages={205--230},
   issn={0010-3616},
   review={\MR{2563804 (2010m:32022)}},
   doi={10.1007/s00220-009-0915-0},
}

\bib{FKZ}{article}{
   author={Ferrari, Frank},
   author={Klevtsov, Semyon},
   author={Zelditch, Steve},
   title={Simple matrix models for random Bergman metrics},
   journal={J. Stat. Mech. Theory Exp.},
   date={2012},
   number={4},
   pages={P04012, 24},
   issn={1742-5468},
   review={\MR{2956264}},
}

\bib{Ha}{unpublished}{
author={B. Hanin},
title={Correlations and Pairing Between Zeros and Critical Points of Gaussian Random Polynomials},  
note={arXiv:1207.4734v2},}

\bib{zl}{unpublished}{
author={Liu, Chiung-ju },
author={Lu, Zhiqin},
title={On the  Asymptotic  Expansion of Tian-Yau-Zelditch},
note={arXiv:1105.0221},
}

\bib{lu-1}{article}{
   author={Lu, Zhiqin},
   title={On the lower order terms of the asymptotic expansion of
   Tian-Yau-Zelditch},
   journal={Amer. J. Math.},
   volume={122},
   date={2000},
   number={2},
   pages={235--273},
   issn={0002-9327},
   review={\MR{1749048 (2002d:32034)}},
}

\bib{MM}{book}{
   author={Ma, Xiaonan},
   author={Marinescu, George},
   title={Holomorphic Morse inequalities and Bergman kernels},
   series={Progress in Mathematics},
   volume={254},
   publisher={Birkh\"auser Verlag},
   place={Basel},
   date={2007},
   pages={xiv+422},
   isbn={978-3-7643-8096-0},
   review={\MR{2339952 (2008g:32030)}},
}

\bib{MM2}{article}{
   author={Ma, Xiaonan},
   author={Marinescu, George},
   title={Berezin-Toeplitz quantization on K\"ahler manifolds},
   journal={J. Reine Angew. Math.},
   volume={662},
   date={2012},
   pages={1--56},
   issn={0075-4102},
   review={\MR{2876259}},
   doi={10.1515/CRELLE.2011.133},
}

\bib{MM3}{unpublished}{
   author={Ma, Xiaonan},
   author={Marinescu, George},
title={Remark on the off-diagonal expansion of the Bergman kernel on compact K\"ahler manifolds },  
note={arXiv:1302.2346v1},}

\bib{Pa}{article}{
   author={Paoletti, Roberto},
   title={Local asymptotics for slowly shrinking spectral bands of a
   Berezin-Toeplitz operator},
   journal={Int. Math. Res. Not. IMRN},
   date={2011},
   number={5},
   pages={1165--1204},
   issn={1073-7928},
   review={\MR{2775879 (2012m:47053)}},
   doi={10.1093/imrn/rnq109},
}
   
\bib{SZ2}{article}{
   author={Shiffman, Bernard},
   author={Zelditch, Steve},
   title={Asymptotics of almost holomorphic sections of ample line bundles
   on symplectic manifolds},
   journal={J. Reine Angew. Math.},
   volume={544},
   date={2002},
   pages={181--222},
   issn={0075-4102},
   review={\MR{1887895 (2002m:58043)}},
   doi={10.1515/crll.2002.023},
}

\bib{SZ3}{article}{
   author={Shiffman, Bernard},
   author={Zelditch, Steve},
   title={Number variance of random zeros on complex manifolds},
   journal={Geom. Funct. Anal.},
   volume={18},
   date={2008},
   number={4},
   pages={1422--1475},
   issn={1016-443X},
   review={\MR{2465693 (2009k:32019)}},
   doi={10.1007/s00039-008-0686-3},
}

\bib{SZ4}{article}{
   author={Shiffman, Bernard},
   author={Zelditch, Steve},
   title={Number variance of random zeros on complex manifolds, II: smooth
   statistics},
   journal={Pure Appl. Math. Q.},
   volume={6},
   date={2010},
   number={4, Special Issue: In honor of Joseph J. Kohn.},
   pages={1145--1167},
   issn={1558-8599},
   review={\MR{2742043 (2011m:32030)}},
}

\bib{SZZ}{article}{
   author={Shiffman, Bernard},
   author={Zelditch, Steve},
   author={Zrebiec, Scott},
   title={Overcrowding and hole probabilities for random zeros on complex
   manifolds},
   journal={Indiana Univ. Math. J.},
   volume={57},
   date={2008},
   number={5},
   pages={1977--1997},
   issn={0022-2518},
   review={\MR{2463959 (2010b:32027)}},
   doi={10.1512/iumj.2008.57.3700},
}

\bib{ST}{article}{
   author={Sodin, Mikhail},
   author={Tsirelson, Boris},
   title={Random complex zeroes. I. Asymptotic normality},
   journal={Israel J. Math.},
   volume={144},
   date={2004},
   pages={125--149},
   issn={0021-2172},
   review={\MR{2121537 (2005k:60079)}},
   doi={10.1007/BF02984409},
}

\bib{ST3}{article}{
   author={Sodin, Mikhail},
   author={Tsirelson, Boris},
   title={Random complex zeroes. III. Decay of the hole probability},
   journal={Israel J. Math.},
   volume={147},
   date={2005},
   pages={371--379},
   issn={0021-2172},
   review={\MR{2166369 (2007a:60028)}},
   doi={10.1007/BF02785373},
}

\bib{SoZe}{article}{
   author={Song, Jian},
   author={Zelditch, Steve},
   title={Bergman metrics and geodesics in the space of K\"ahler metrics on
   toric varieties},
   journal={Anal. PDE},
   volume={3},
   date={2010},
   number={3},
   pages={295--358},
   issn={1948-206X},
   review={\MR{2672796 (2012e:32038)}},
   doi={10.2140/apde.2010.3.295},
}

\bib{Sun}{article}{
   author={Sun, Jingzhou},
   title={Expected Euler characteristic of excursion sets of random holomorphic sections on complex manifolds},
   journal={Indiana Univ. Math. J.},
   date={2012},
   pages={to appear},
   note={arXiv:1103.4598},
}

\bib{Ti}{article}{
   author={Tian, Gang},
   title={On a set of polarized K\"ahler metrics on algebraic manifolds},
   journal={J. Differential Geom.},
   volume={32},
   date={1990},
   number={1},
   pages={99--130},
   issn={0022-040X},
   review={\MR{1064867 (91j:32031)}},
}

\bib{haoxu}{article}{
   author={Xu, Hao},
   title={A closed formula for the asymptotic expansion of the Bergman
   kernel},
   journal={Comm. Math. Phys.},
   volume={314},
   date={2012},
   number={3},
   pages={555--585},
   issn={0010-3616},
   review={\MR{2964769}},
   doi={10.1007/s00220-012-1531-y},
}

\bib{Z}{article}{
   author={Zelditch, Steve},
   title={Szeg\H o kernels and a theorem of Tian},
   journal={Internat. Math. Res. Notices},
   date={1998},
   number={6},
   pages={317--331},
   issn={1073-7928},
   review={\MR{1616718 (99g:32055)}},
   doi={10.1155/S107379289800021X},
}

\end{biblist}
\end{bibdiv}

\end{document}